\documentclass[11pt]{amsart}

\usepackage{amsmath,amssymb,amsthm,mathrsfs,mathtools}
\usepackage{hyperref}
\usepackage{enumitem}
\usepackage[margin=1.15in]{geometry}

\hypersetup{colorlinks=true,linkcolor=blue,citecolor=blue,urlcolor=blue}

\newtheorem{theorem}{Theorem}[section]
\newtheorem{proposition}[theorem]{Proposition}
\newtheorem{lemma}[theorem]{Lemma}
\newtheorem{corollary}[theorem]{Corollary}
\theoremstyle{definition}

\theoremstyle{remark}

\newcommand{\ZZ}{\mathbb Z}
\newcommand{\QQ}{\mathbb Q}
\newcommand{\Li}{\operatorname{Li}}

\newcommand{\ch}{\operatorname{ch}}
\newcommand{\td}{\operatorname{td}}
\newcommand{\rk}{\operatorname{rk}}
\newcommand{\Pic}{\operatorname{Pic}}

\title[All Genus Gopakumar-Vafa Reconstruction]{All Genus Gopakumar-Vafa Reconstruction from Central Difference Forcing}
\author{Xiaobin Li}
\address{School of Mathematics, Southwest Jiaotong University, Chengdu, Sichuan 611756, China}
\date{September 18, 2026}
\keywords{Gromov-Witten theory, Gopakumar-Vafa invariants, central difference equations, BPS invariants, Witt-type ghost transforms, symmetric products}
\subjclass[2020]{14N35, 14J32, 14H10}

\begin{document}
\begin{abstract}
Generating functions in curve counting theory mix two different kinds of information: multiple covers mix curve degrees, while the topological string coupling mixes BPS genera. We show that a single central difference operator separates these two effects in a precise and invertible way for the all genus Gopakumar-Vafa expansion along an effective ray of a Calabi-Yau threefold.

Writing
\[
y=\left(2\sin\frac{\lambda}{2}\right)^2,
\]
we combine the Bryan-Pandharipande cover polynomials with the corresponding Fej\'er factors and obtain a Chebyshev-Fej\'er ghost transform for the normalized difference forcing. The transform admits an explicit operator-valued M\"obius inverse. Its $y$-adic filtration is exactly the BPS genus filtration: the forcing modulo $y^{G+1}$ is equivalent to the complete collection of BPS invariants of genera at most $G$. Thus the genus zero reconstruction from the classical specialization is the first associated graded piece of an all genus hierarchy.

We also characterize the integral image of the transform. Integral BPS forcing is equivalent to a two stage condition: Witt-Dwork type Frobenius congruences for the underlying ghost coordinates, together with an additional Fej\'er divisibility condition imposed by the central difference operator. In prime power degree this gives a strengthened Dwork congruence containing the full Fej\'er factor.

For local curves, the formalism has concrete geometric realizations. A super-rigid elliptic curve gives an exact Dedekind $\eta$ expression for the forcing. In the balanced higher genus local curve theory, the upper degree two BPS invariants are realized as top Chern numbers on natural stacks of square roots over symmetric products. These results connect central difference equations with BPS reconstruction, arithmetic ghost structures, and the geometry of local curves.
\end{abstract}

\maketitle

\section{Introduction and main results}
\label{sec:intro}

Let $X$ be a Calabi-Yau threefold and fix a primitive effective curve class $\beta$. We work along the ray $\ZZ_{>0}\beta$ and write
\[
Q=q^\beta.
\]
For $d\ge1$ and $g\ge0$, let $n_{g,d}$ denote the finite BPS coefficients entering the raywise Gopakumar-Vafa transform. Throughout the paper, the structural statements are conditional only on the existence of such a finite BPS expansion in each degree; no independent foundational construction of the all genus GV invariants is required.

The raywise BPS-resummed contribution is the usual Gopakumar-Vafa multiple cover expansion \cite{GopakumarVafaI,GopakumarVafaII}. This structure is part of the broader threefold curve counting framework relating Gromov-Witten, Donaldson-Thomas and stable-pair theories \cite{MNOPI,MNOPII,PandharipandeThomas2009,PandharipandeThomas2010}; modern geometric constructions of GV invariants via vanishing cycles provide a further rigorous BPS framework \cite{MaulikToda2018}. We use only the formal finite BPS expansion on the chosen ray:
\begin{equation}
F^{\mathrm{BPS}}(\lambda,Q)
=
\sum_{d\ge1}\sum_{g\ge0}n_{g,d}
\sum_{k\ge1}\frac{Q^{dk}}{k}
\left(2\sin\frac{k\lambda}{2}\right)^{2g-2}.
\label{eq:GV-intro}
\end{equation}
For the resolved conifold, Alim proved a central difference equation for the full nonconstant potential and observed analogous degreewise equations for the genus zero GV-resummed contribution of a general Calabi-Yau threefold \cite{Alim2023}. A complementary recent resurgent analysis of the conifold relates weak coupling closed Gromov-Witten theory to relative curve counting in a strong coupling regime \cite{AlimTischler2026}. In the scalar genus zero theory of \cite{Li2026Scalar}, a common central difference forcing was shown to reconstruct the local genus zero GV spectrum on an exceptional ray.

There is also a substantial, but logically distinct, difference equation tradition in quantum $K$-theory. Finite-difference Toda systems already appear in the work of Givental-Lee \cite{GiventalLee2003}; $q$-difference modules were later used for reconstruction in quantum $K$-theory by Iritani-Milanov-Tonita \cite{IritaniMilanovTonita2015}, and explicit $q$-difference equations were developed by Ruan-Wen \cite{RuanWen2022}. For Calabi-Yau threefolds, the quintic has been studied from the quantum $K$-theoretic $q$-difference viewpoint by Garoufalidis-Scheidegger and Wen \cite{GaroufalidisScheidegger2022,Wen2022}. These equations act on $K$-theoretic $I$- or $J$-functions by multiplicative Novikov shifts; the additive Kahler shifts considered here instead arise directly from the GV-resummed cohomological potential.

The purpose of the present paper is to retain the entire BPS genus expansion and ask what information is encoded by the common central difference forcing. A first indication is the sectorwise identity
\[
\Delta_d\Phi_{g,d}=-\Phi_{g+1,d},
\]
which shows that the degree adapted central difference raises the BPS genus by one. The main point is that after replacing the family of degree adapted operators by one common central difference, the resulting all genus forcing still admits an exact inversion.

\medskip
There are two triangular operations hidden in the Gopakumar-Vafa expansion \eqref{eq:GV-intro}. The first is arithmetic: a primitive degree $d$ contributes to every multiple $dk$, so the coefficient of $Q^N$ is a divisor sum over $d\mid N$. The second is genus theoretic: the factor
\[
\left(2\sin\frac{k\lambda}{2}\right)^{2g-2}
\]
causes a fixed BPS genus to contribute to many powers of the string coupling variable. As a consequence, neither a coefficient of $Q^N$ nor a Taylor coefficient in $\lambda$ is, by itself, a BPS invariant. Recovering the underlying integers requires undoing both kinds of mixing.

The common central difference provides a useful observable because it acts diagonally on the Novikov modes. On $Q^N$ it multiplies by
\[
-4\sin^2\frac{N\lambda}{2},
\]
so it removes the universal genus zero sine square pole without destroying the higher genus dependence. After introducing
\[
y=\left(2\sin\frac{\lambda}{2}\right)^2,
\]
this eigenvalue becomes an integral polynomial $-P_N(y)$. The resulting forcing coefficients are therefore polynomials rather than Laurent series with a genus zero pole. More importantly, the multiplication law $P_a\circ P_b=P_{ab}$ remembers the divisor arithmetic of multiple covers. This is the mechanism which turns the curve counting problem into an invertible ghost type transform.

From this point of view, the main results have a simple interpretation. The exact reconstruction theorem says that the common difference forcing loses no all genus BPS information. The genus filtration theorem says more: the order of the quantum jet measures precisely how far one has reconstructed in BPS genus. Finally, the integral image theorem asks an arithmetic question which is independent of any chosen geometric construction of the BPS invariants: which integral polynomial forcing sequences can possibly arise from integral BPS spectra? The answer separates into a familiar Frobenius/ghost condition and a second divisibility condition which records the zeros of the sine kernel.

This perspective is useful even before one turns to examples. In genus zero, setting $y=0$ collapses the theory to an ordinary weighted divisor convolution. At higher genus, the nonconstant $y$-dependence is essential rather than decorative. The transition from genus zero to all genera is therefore not obtained merely by keeping more terms in the same formula; it changes the information content of the forcing itself.

\medskip
\noindent\textbf{Relation to earlier all genus and arithmetic structures.}
The all genus multiple cover transform itself, the cover polynomials used below, and the use of M\"obius inversion in local BPS integrality go back to Bryan-Pandharipande \cite{BryanPandharipande2001}; their later TQFT solution gives a complete and effective description of the local Gromov-Witten theory of curves \cite{BryanPandharipande2008}. The present paper does not propose a new all genus GV transform. Instead, it studies the image of that transform after applying one common central difference operator. This produces a new forcing object whose $y$-adic jet, integral image, and local curve geometry can be described intrinsically.

There is also a classical arithmetic background behind the terminology used below. The relation between Witt vectors, necklace algebras, Burnside constructions and $\lambda$-ring/Adams operations has a long history \cite{MetropolisRota1983,DressSiebeneicher1988,Wilkerson1982,Elliott2006,Hazewinkel2009,Oh2006}. The Chebyshev composition operators behave as Adams/Frobenius operations, while the undressed transform has the formal shape of a Witt or necklace ghost map. The new feature here is the degree dependent Fej\'er dressing forced by the Gromov-Witten sine square denominator. This extra factor produces both the stronger integrality condition and the torsion angle vanishing of Section~\ref{sec:integrality}.

Set
\begin{equation}
y=\left(2\sin\frac{\lambda}{2}\right)^2.
\label{eq:yintro}
\end{equation}
Following Bryan-Pandharipande \cite{BryanPandharipande2001}, let $P_m(y)\in\ZZ[y]$ be defined by
\begin{equation}
P_m(4\sin^2 t)=4\sin^2(mt).
\label{eq:Pintro}
\end{equation}
Equivalently,
\[
P_m(y)=2-2T_m\left(1-\frac y2\right),
\]
where $T_m$ is the Chebyshev polynomial. These polynomials satisfy
\[
P_a\circ P_b=P_{ab}.
\]
Define the Fej\'er factor
\begin{equation}
\mathcal F_m(y)=\frac{P_m(y)}{y}
\label{eq:Fintro}
\end{equation}
and the composition operator
\[
\Psi_mf(y)=f(P_m(y)).
\]
Then $\Psi_a\Psi_b=\Psi_{ab}$.

For each degree $d$, let
\begin{equation}
B_d(y)=\sum_{g\ge0}n_{g,d}y^g
\label{eq:Bintro}
\end{equation}
be the BPS genus polynomial. Put $Q=e^{2\pi it}$, $h=\lambda/(2\pi)$, and define the common central difference
\[
\Delta=T_h-2+T_{-h}.
\]
Writing
\[
R(Q,y)=\Delta F^{\mathrm{BPS}}(Q,y)=\sum_{N\ge1}R_N(y)Q^N,
\qquad
H_N(y)=-NR_N(y),
\]
we obtain the basic transform.

\begin{theorem}[Chebyshev-Fej\'er ghost formula]
\label{thm:intro-ghost}
For every $N\ge1$,
\begin{equation}
\boxed{
H_N
=
\sum_{d\mid N}d\,\Psi_{N/d}\bigl(\mathcal F_dB_d\bigr).
}
\label{eq:intro-ghost}
\end{equation}
\end{theorem}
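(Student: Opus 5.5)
The proof is a direct computation, organized so that the divisor sum appears naturally.

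\emph{Step 1: the action of $\Delta$ on Novikov modes.} With $Q=e^{2\pi i t}$ and $h=\lambda/(2\pi)$, the shift $T_{\pm h}$ multiplies $Q^N$ by $e^{\pm iN\lambda}$. Hence
\[
\Delta Q^N=\bigl(e^{iN\lambda}-2+e^{-iN\lambda}\bigr)Q^N=-4\sin^2\frac{N\lambda}{2}\,Q^N=-P_N(y)\,Q^N,
\]
using \eqref{eq:Pintro} with $t=\lambda/2$. Because $\Delta$ is diagonal on monomials, it can be applied termwise to \eqref{eq:GV-intro}. This is legitimate as an identity of formal series in $Q$: for fixed $N$, only finitely many pairs $(d,k)$ with $dk=N$ contribute, and each $B_d$ is a polynomial by the finiteness assumption.

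\emph{Step 2: rewriting the genus sum.} For the term with $dk=N$, the coupling factor is
\[
\left(2\sin\frac{k\lambda}{2}\right)^{2g-2}=P_k(y)^{g-1}.
\]
Summing over $g$ gives
\[
\sum_g n_{g,d}P_k(y)^{g-1}=\frac{B_d(P_k(y))}{P_k(y)}.
\]
The coefficient of $Q^N$ in $F^{\mathrm{BPS}}$ is therefore $\sum_{dk=N}\frac1k\,B_d(P_k)/P_k$. Applying $\Delta$ multiplies this by $-P_N(y)$, so
\[
R_N=-\sum_{dk=N}\frac1k\,\frac{P_N(y)}{P_k(y)}\,B_d(P_k(y)).
\]

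\emph{Step 3: the Fej\'er identity.} This is the only point needing care. Since $P_N=P_d\circ P_k$ and $\mathcal F_d(y)=P_d(y)/y$ by \eqref{eq:Fintro}, we have
\[
\frac{P_N(y)}{P_k(y)}=\frac{P_d(P_k(y))}{P_k(y)}=\mathcal F_d(P_k(y))=\Psi_k\mathcal F_d(y).
\]
This is an identity of polynomials because $y\mid P_d(y)$, since $P_d(0)=0$. Consequently
\[
\frac{P_N}{P_k}\,B_d(P_k)=\Psi_k(\mathcal F_dB_d),
\]
and the genus-zero pole is removed.

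\emph{Step 4: conclusion.} Multiplying by $-N$ and using $N/k=d$ gives
\[
H_N=\sum_{d\mid N} d\,\Psi_{N/d}(\mathcal F_dB_d),
\]
which is \eqref{eq:intro-ghost}. Nothing here is genuinely hard. The only points to state carefully are the validity of the termwise formal manipulation and the divisibility $y\mid P_d(y)$, which makes $\mathcal F_d\in\ZZ[y]$ and turns each coefficient into a polynomial.
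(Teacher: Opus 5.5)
Your proof is correct and is essentially the paper's argument: the diagonal multiplier $\Delta Q^N=-P_N(y)Q^N$, the rewriting of the genus sum as $B_d(P_k)/P_k$ (Lemma~\ref{lem:potentialB} and Proposition~\ref{prop:common-forcing}), and the identity $P_N/P_{N/d}=\Psi_{N/d}(\mathcal F_d)$ coming from the composition law (equation~\eqref{eq:PsiF}). Your remarks on termwise application and on $y\mid P_d(y)$ are appropriate added care, but the route is the same.
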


Define
\[
\mathfrak M_N(H)=\sum_{r\mid N}\mu(r)\Psi_r(H_{N/r}).
\]
The composition law makes ordinary divisor Mobius inversion compatible with the cover polynomial operators.

\begin{theorem}[Exact all genus reconstruction]
\label{thm:intro-recon}
For every $N\ge1$,
\begin{equation}
\boxed{
N\mathcal F_N(y)B_N(y)
=
\sum_{r\mid N}\mu(r)H_{N/r}(P_r(y)).
}
\label{eq:intro-recon}
\end{equation}
Consequently the complete central difference forcing and the complete raywise all genus BPS spectrum mutually determine one another.
\end{theorem}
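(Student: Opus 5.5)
The plan is to take Theorem~\ref{thm:intro-ghost} as given and invert it by a Möbius argument twisted by the composition operators $\Psi_m$. The only structural inputs are the composition law $P_a\circ P_b=P_{ab}$, which gives $\Psi_a\Psi_b=\Psi_{ab}$, and the linearity of each $\Psi_m$ on $\QQ[y]$ (or on formal power series in $y$, since $P_m(0)=0$). Put $G_d:=d\,\mathcal F_d B_d$; by \eqref{eq:Fintro} this is a polynomial because $y\mid P_d(y)$. Then \eqref{eq:intro-ghost} reads
\[
H_N=\sum_{d\mid N}\Psi_{N/d}G_d .
\]
The claim to prove is $G_N=\mathfrak M_N(H)=\sum_{r\mid N}\mu(r)\Psi_r H_{N/r}$.

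For the key computation, substitute the ghost formula into $\mathfrak M_N(H)$:
\[
\sum_{r\mid N}\mu(r)\Psi_r\sum_{d\mid N/r}\Psi_{N/(rd)}G_d
=\sum_{d\mid N}\Bigl(\sum_{r\mid N/d}\mu(r)\Bigr)\Psi_{N/d}G_d .
\]
This uses $\Psi_r\Psi_{N/(rd)}=\Psi_{N/d}$ and interchanges the finite double sum over pairs $(r,d)$ with $rd\mid N$. The inner sum is $\sum_{r\mid m}\mu(r)=[m=1]$ with $m=N/d$, so only $d=N$ survives. That term is $\Psi_1G_N=G_N$, which is exactly \eqref{eq:intro-recon}, since $\Psi_r H_{N/r}=H_{N/r}(P_r(y))$.

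For the "consequently" clause, the two directions are handled separately.
\begin{itemize}
\item Given the BPS spectrum, Theorem~\ref{thm:intro-ghost} determines all $H_N$, hence all $R_N=-H_N/N$, hence the forcing $R$.
\item Conversely, \eqref{eq:intro-recon} determines $N\mathcal F_NB_N$ from the forcing. To recover $B_N$ we cancel $N\mathcal F_N$. Since $\mathcal F_N(0)=P_N'(0)=N^2\neq0$ (from $4\sin^2(Nt)\sim N^2\cdot4t^2$), $\mathcal F_N$ is invertible in $\QQ[[y]]$. Hence $B_N$, and thus every $n_{g,N}$, is recovered, for instance by dividing polynomials in $\QQ[y]$.
\end{itemize}

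I expect no serious obstacle. The only points needing care are:
\begin{itemize}
\item checking that $\Psi_m$ really commutes with the finite sums and is multiplicative in $m$, which is immediate from the composition identity;
\item confirming the nonvanishing $\mathcal F_N(0)=N^2$, which justifies the cancellation in the converse direction.
\end{itemize}
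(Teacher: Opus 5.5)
Your proposal is correct and follows the paper's own argument. You substitute the ghost formula into $\mathfrak M_N(H)$, use $\Psi_a\Psi_b=\Psi_{ab}$ to reduce to $\sum_{r\mid N/d}\mu(r)$, and keep only $d=N$, exactly as in the proof of Theorem~\ref{thm:operatorMobius}; your converse via $\mathcal F_N(0)=N^2\neq0$ is likewise how the paper argues in Theorem~\ref{thm:quantumjet}.
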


The reconstruction is filtered by the string coupling variable. Since
\[
P_m(y)=m^2y+O(y^2),
\qquad
\mathcal F_m(y)=m^2+O(y),
\]
the $y$-adic filtration is preserved. After subtracting the contributions of genera $<g$, let $\rho_{g,N}$ denote the remaining coefficient of $y^g$ in $R_N$.

\begin{theorem}[BPS genus filtration]
\label{thm:intro-filtration}
For every $g\ge0$,
\begin{equation}
\boxed{
\rho_{g,N}
=
-N^{2g-1}\sum_{d\mid N}d^{3-2g}n_{g,d},
}
\label{eq:intro-graded}
\end{equation}
and
\begin{equation}
\boxed{
n_{g,N}
=
-\frac1{N^2}\sum_{r\mid N}\mu(r)r^{2g-1}\rho_{g,N/r}.
}
\label{eq:intro-graded-inv}
\end{equation}
Equivalently, for every $G\ge0$,
\[
R(Q,y)\bmod y^{G+1}
\quad\Longleftrightarrow\quad
\{n_{g,d}:0\le g\le G,\ d\ge1\}.
\]
\end{theorem}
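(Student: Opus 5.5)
We obtain Theorem~\ref{thm:intro-filtration} by reading Theorem~\ref{thm:intro-ghost} modulo powers of $y$. Rewrite the ghost formula for $R_N=-H_N/N$ as
\[
R_N(y)=-\frac1N\sum_{d\mid N}\sum_{g'\ge0}d\,n_{g',d}\,\mathcal F_d\bigl(P_{N/d}(y)\bigr)\,P_{N/d}(y)^{g'},
\]
using $\Psi_m(\mathcal F_dB_d)=\mathcal F_d(P_m)\,B_d(P_m)$. Since $P_m(y)=m^2y+O(y^2)$, the genus $g'$ term is divisible by $y^{g'}$. Its lowest coefficient is $d\,n_{g',d}\,d^2 (N/d)^{2g'}$, using $\mathcal F_d(0)=d^2$. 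Hence, modulo $y^{g+1}$, only genera $g'\le g$ contribute. Subtracting the (explicit, $y$-polynomial) contributions of genera $g'<g$ leaves the coefficient of $y^g$ coming solely from $g'=g$.

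That coefficient is
\[
\rho_{g,N}=-\frac1N\sum_{d\mid N}d^3(N/d)^{2g}n_{g,d}=-N^{2g-1}\sum_{d\mid N}d^{3-2g}n_{g,d}.
\]
This is \eqref{eq:intro-graded}. The only thing to state carefully is the meaning of ``subtracting the contributions of genera $<g$''. I would take it to mean the coefficient of $y^g$ in $R_N+\frac1N\sum_{d\mid N}\sum_{g'<g}(\cdots)$, which is determined by the $n_{g',d}$ with $g'<g$.

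For the inversion \eqref{eq:intro-graded-inv}, set $a_{g}(d)=d^{3-2g}n_{g,d}$. Then $-N^{1-2g}\rho_{g,N}=\sum_{d\mid N}a_g(d)$, so ordinary M\"obius inversion gives
\[
a_g(N)=-\sum_{r\mid N}\mu(r)(N/r)^{1-2g}\rho_{g,N/r}.
\]
Multiplying by $N^{2g-3}$ gives $n_{g,N}=-N^{-2}\sum_{r\mid N}\mu(r)r^{2g-1}\rho_{g,N/r}$, as claimed. Alternatively one can read off the $y^g$ coefficient of \eqref{eq:intro-recon} directly, which is a useful consistency check.

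The equivalence $R\bmod y^{G+1}\Leftrightarrow\{n_{g,d}\}_{g\le G}$ follows by induction on $g$.

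\textbf{Forward direction.} The $n_{g,d}$ with $g\le G$ determine $R_N\bmod y^{G+1}$, because higher genera only enter at order $y^{G+1}$.

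\textbf{Converse.} Suppose $R\bmod y^{G+1}$ is known. At stage $g\le G$ the lower-genus invariants are already recovered. Their contribution to the $y^g$ coefficient is then computable, which yields $\rho_{g,N}$ for all $N$. The inversion formula then recovers $n_{g,N}$.

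The main obstacle is bookkeeping rather than substance. One must verify that the finite BPS hypothesis makes every sum finite degreewise, and that $\mathcal F_d$ is a genuine polynomial with $\mathcal F_d(0)=d^2$. The latter follows from $P_d(y)=d^2y+O(y^2)$ and $P_d(0)=0$.
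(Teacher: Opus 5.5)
Your proposal is correct and follows the paper's argument: you read the ghost formula on the associated graded of the $y$-adic filtration, collecting the factor $d^2$ from $\mathcal F_d$, the factor $(N/d)^{2g}$ from $\Psi_{N/d}$ and the outer factor $d$, and then apply ordinary M\"obius inversion. The only difference is in the converse direction of the jet equivalence: you induct on genus using the graded inversion, whereas the paper applies the exact operator-valued inverse and uses that $\mathcal F_N(0)=N^2\neq0$ makes $\mathcal F_N$ a unit in $\QQ[[y]]$; both arguments work.
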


Thus the specialization $y=0$ records exactly the genus zero spectrum, while successive quantum coefficients record higher BPS genera through a triangular reconstruction. The scalar genus zero theorem of \cite{Li2026Scalar} is precisely the $g=0$ associated graded piece.

The exact inverse also leads to an intrinsic description of the integral image. Given $H_N\in\ZZ[y]$, define
\[
A_N=\frac1N\mathfrak M_N(H).
\]
The existence of integral $A_N$ is controlled by a Witt-Dwork type ghost condition, whereas the requirement that $A_N$ arise from a polynomial BPS genus function imposes the additional divisibility $\mathcal F_N\mid A_N$.

\begin{theorem}[Integral BPS image criterion]
\label{thm:intro-integrality}
An integral polynomial sequence $H=(H_N)$ arises from a unique integral BPS sequence $B_N\in\ZZ[y]$ if and only if, for every $N$,
\begin{equation}
\boxed{
\mathfrak M_N(H)\in N\mathcal F_N(y)\ZZ[y].
}
\label{eq:intro-integrality}
\end{equation}
Equivalently, the forcing satisfies the Dwork type congruences
\[
H_N-\Psi_p(H_{N/p})\in p^{v_p(N)}\ZZ[y]
\]
for every prime $p\mid N$, and the resulting integral ghost coordinate $A_N$ is divisible by $\mathcal F_N$.
\end{theorem}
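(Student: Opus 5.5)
The plan is to reduce everything to Theorem~\ref{thm:intro-recon} and to the structure of the operator-valued M\"obius inversion. Given any sequence $H=(H_N)$ in $\ZZ[y]$, Theorems~\ref{thm:intro-ghost} and~\ref{thm:intro-recon} show that $H$ is the forcing of a BPS sequence $(B_N)$ exactly when
\[
N\mathcal F_N B_N=\mathfrak M_N(H)\quad\text{for all } N.
\]
The identity $\Psi_a\Psi_b=\Psi_{ab}$ makes the ghost map $B\mapsto H$ and $\mathfrak M$ mutually inverse on sequences of polynomials over $\QQ$. So the first step is uniqueness: $B_N$ is forced to equal $\mathfrak M_N(H)/(N\mathcal F_N)$, which makes sense in $\QQ(y)$ because $\mathcal F_N$ is a nonzero polynomial with $\mathcal F_N(0)=N^2$. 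Existence of an integral $B_N\in\ZZ[y]$ is then literally the condition $\mathfrak M_N(H)\in N\mathcal F_N\ZZ[y]$. One direction is to substitute a given integral $B$ into \eqref{eq:intro-ghost}; the other is to define $B_N$ by the quotient and apply Theorem~\ref{thm:intro-ghost}. This gives the boxed criterion almost immediately.

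The real content is the ``equivalently'' clause. The criterion splits as (a) $\mathfrak M_N(H)\in N\ZZ[y]$, so that $A_N=\mathfrak M_N(H)/N$ is integral, and (b) $\mathcal F_N\mid A_N$ in $\ZZ[y]$. The splitting of divisibility by $N\mathcal F_N$ into these two conditions needs care, since $\mathcal F_N$ need not be coprime to $N$. I would argue it via Gauss's lemma: $\mathcal F_N=P_N/y$ is monic up to sign, with leading coefficient $\pm1$ because $P_N$ has leading coefficient $\pm1$ in degree $N$. Hence if $A_N\in\ZZ[y]$ and $\mathcal F_N\mid A_N$ in $\QQ[y]$, the quotient lies in $\ZZ[y]$ by the division algorithm with a unit leading coefficient.

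The main step is to show that (a) for all $N$ is equivalent to the Dwork congruences. I would follow the classical Witt-vector ghost argument, with $\Psi_p$ in the role of Frobenius. The needed input is the congruence
\[
\Psi_p f\equiv f(y)^p \pmod{p},
\]
or more precisely $P_p(y)\equiv y^p \pmod p$, which follows from $P_p=2-2T_p(1-y/2)$ and $T_p(x)\equiv x^p \pmod p$ (with a separate check for $p=2$).

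I would then localize at each prime $p$. Write $N=p^a m$ with $p\nmid m$, and group the M\"obius sum according to the $p$-part of $r$:
\[
\mathfrak M_N(H)=\sum_{s\mid m}\mu(s)\,\Psi_s\bigl(H_{N/s}-\Psi_p H_{N/(ps)}\bigr).
\]
Each bracket is divisible by $p^{a}$ exactly when the Dwork congruence holds at level $N/s$, whose $p$-adic valuation is still $a$. Conversely, I would argue by induction on $m$: divisibility of $\mathfrak M_{N'}$ by $p^a$ for all $N'\mid N$ with $v_p(N')=a$ recovers the bracket for $s=1$. This is a finite M\"obius inversion over divisors of $m$, and $\Psi_s$ preserves $p^a\ZZ[y]$. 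Combining over all primes $p$ gives (a), and hence the stated equivalence.

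I expect the main obstacle to be this induction, that is, checking that the operator-valued sum inverts cleanly prime by prime. Since the operators $\Psi_s$ commute and preserve $\ZZ[y]$ and its ideals $p^a\ZZ[y]$, I expect the argument to go through.
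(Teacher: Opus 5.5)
Your proposal is correct and is essentially the paper's proof. The boxed criterion is Theorem~\ref{thm:operatorMobius} together with operator-valued M\"obius inversion. The direction Dwork $\Rightarrow$ integral $A_N$ uses the same splitting $r=s,\ ps$ of the M\"obius sum that the paper uses. The only difference is in the direction integral $A_N\Rightarrow$ Dwork: the paper substitutes the ghost formula directly, $H_N-\Psi_p(H_{N/p})=p^a\sum_{e\mid m}e\,\Psi_{m/e}(A_{p^ae})$, whereas you invert the bracket identity by induction on $m$; both work, and your remark that $\mathcal F_N$ has leading coefficient $\pm1$ is a harmless extra.

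One small correction: the Frobenius congruence $P_p\equiv y^p\pmod p$ is never actually used. Since $\Psi_p\Psi_{N/(pd)}=\Psi_{N/d}$, the cancellation in $H_N-\Psi_p(H_{N/p})$ is an exact identity, so no congruence modulo $p$ enters either direction.
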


Schematically,
\[
\boxed{
\text{integral BPS forcing}
=
\text{Witt-Dwork integrality}
+
\text{Fej\'er divisibility}.
}
\]
For $N=p^a$, the criterion becomes
\[
H_{p^a}(y)-H_{p^{a-1}}(P_p(y))
\in p^a\mathcal F_{p^a}(y)\ZZ[y].
\]

The final part of the paper studies local curves. Bryan-Pandharipande solved the local Gromov-Witten theory of curves by localization, degeneration and TQFT methods \cite{BryanPandharipande2008}. We use their formulas as input and ask how the resulting data appear through the forcing. For a super-rigid elliptic curve the forcing has the exact form
\[
\exp(R_E)
=
\frac{\eta(\tau)^2}{\eta(\tau+h)\eta(\tau-h)}.
\]
For a genus $g$ local curve, the exact degree two theory yields upper BPS coefficients
\[
n_{2g-1+e,2}
=
2^{2g-1-2e}\binom{g-1}{2e}.
\]
We realize these integers as top Chern numbers on natural square root stacks over $C^{(2e)}$.

The all genus cover transform, the polynomials $P_m$, and the use of M\"obius inversion in local BPS integrality originate in \cite{BryanPandharipande2001}. The ghost/Dwork terminology reflects standard Witt vector and necklace ring structures \cite{Hazewinkel2009,Oh2006}. Our contribution is the central difference realization of a Fej\'er-dressed ghost transform, its exact genus filtration and integral image, and the local curve Chern number interpretation of the upper degree two BPS layers.

\medskip
\noindent\textbf{Why the forcing viewpoint is different.}
The reconstruction in this paper should not be read as a new proof of the classical all genus GV/BPS transform. Bryan-Pandharipande invert the multiple cover expansion in order to extract BPS integers from Gromov-Witten data. Here the input is different: we first apply a single finite-difference operator to the entire raywise potential and then ask whether the resulting forcing alone remembers the BPS spectrum. The answer is affirmative, but the inverse now carries a nontrivial Fej\'er factor. This factor is invisible in the abstract divisor M\"obius inversion and becomes decisive over $\mathbb Z[y]$: it is responsible for the difference between ordinary ghost integrality and genuine BPS integrality.

There is a second distinction at the geometric end of the paper. The square root Chern numbers are not asserted to compute raw stable map components. They occur after the BPS transform, and their equality with the upper degree two BPS integers is proved only in the balanced local curve specialization for which the required BPS polynomial is supplied by the exact Bryan-Pandharipande theory. Separating these two levels avoids conflating the virtual geometry of stable maps with the integral BPS coordinates recovered from the forcing.

\medskip
\noindent\textbf{Proof strategy.}
The argument has three layers. First, on the coefficient of $Q^N$ the common central difference contributes the cover polynomial $-P_N(y)$. Combining this with the divisor decomposition $N=dk$ converts the GV multiple cover expansion into the Chebyshev--Fej\'er ghost formula. The semigroup identity $P_a\circ P_b=P_{ab}$ then makes ordinary divisor M\"obius inversion operator-valued and yields an exact inverse. Second, the leading expansions $P_m(y)=m^2y+O(y^2)$ and $\mathcal F_m(y)=m^2+O(y)$ show that the transform preserves the $y$-adic filtration; its associated graded becomes an elementary weighted divisor convolution, which gives the uniform genus reconstruction formula. Third, integrality is separated into the classical ghost obstruction and the genuinely geometric Fej\'er obstruction. The former is equivalent to prime-power Frobenius congruences, while the latter asks the reconstructed dressed coordinate to vanish at the nontrivial torsion angles of the sine kernel.

The local curve applications use no new solution of local Gromov-Witten theory. We insert the exact Bryan-Pandharipande degree one and degree two formulas into the reconstruction. The resulting upper degree two BPS integers are then compared with a separate Grothendieck-Riemann-Roch calculation on a stack of square roots over $C^{(2e)}$. The local Calabi-Yau identity $\operatorname{ch}(\mathcal N)\operatorname{td}(C)=2$ is responsible for the unexpectedly simple Chern character.

\medskip
\noindent\textbf{A degree two model calculation.}
The basic mechanism can already be seen without any general notation. Put
\[
A_d(y)=\mathcal F_d(y)B_d(y).
\]
In degrees one and two, the ghost transform reads
\[
H_1=A_1,
\qquad
H_2=\Psi_2(A_1)+2A_2.
\]
Thus
\[
A_1=H_1,
\qquad
A_2=\frac{H_2-\Psi_2(H_1)}{2},
\]
and, since
\[
\mathcal F_2(y)=4-y,
\]
one obtains the explicit BPS reconstruction
\begin{equation}
\boxed{
B_2(y)
=
\frac{H_2(y)-H_1(P_2(y))}{2(4-y)}.
}
\label{eq:intro-degree-two-model}
\end{equation}
This single formula contains the main features of the paper. First, it exhibits the divisor subtraction which removes the degree one double cover contribution. Second, integrality of the dressed coordinate requires the numerator to be even; this is the first Dwork congruence. Third, genuine BPS integrality requires the stronger divisibility by $4-y$, the degree two Fej\'er factor. Finally, because
\[
P_2(y)=4y-y^2,
\]
the coefficient of a fixed power of $y$ receives contributions only from that BPS genus and lower genera. The reconstruction is therefore triangular in genus.

The model also explains why the all genus problem differs from the scalar genus-zero one. At $y=0$, equation \eqref{eq:intro-degree-two-model} reduces to the ordinary arithmetic reconstruction of the degree two genus zero invariant. Away from $y=0$, the same formula retains the full genus polynomial $B_2(y)$. In higher degree the expressions involve all divisors of $N$, but the composition law $P_a\circ P_b=P_{ab}$ makes the general inverse no more mysterious than this degree two calculation: M\"obius inversion performs simultaneously all of these primitive multiple cover subtractions.

The paper is organized as follows. Section~\ref{sec:calculus} develops the Chebyshev--Fej\'er central difference calculus. Section~\ref{sec:reconstruction} proves exact reconstruction and the BPS genus filtration. Section~\ref{sec:integrality} characterizes integral BPS forcing. Section~\ref{sec:localcurves} treats local curves, the elliptic $\eta$-function forcing, and the square root Chern number realization.

\section{Chebyshev--Fej\'er central difference calculus}
\label{sec:calculus}

For $g\ge0$ and $d\ge1$, define
\begin{equation}
\Phi_{g,d}(\lambda,Q)
=
\sum_{k\ge1}\frac{Q^{dk}}{k}
\left(2\sin\frac{k\lambda}{2}\right)^{2g-2}.
\label{eq:Phi}
\end{equation}
Then
\[
F^{\mathrm{BPS}}=\sum_{d\ge1}\sum_{g\ge0}n_{g,d}\Phi_{g,d}.
\]

For $a\in\QQ$, let $T_af(t)=f(t+a)$. Since $Q=e^{2\pi it}$,
\[
T_aQ^N=e^{2\pi iNa}Q^N.
\]
For each $d$, set
\[
\Delta_d=T_{h/d}-2+T_{-h/d}.
\]
On $Q^{dk}$,
\[
\Delta_dQ^{dk}=-4\sin^2\frac{k\lambda}{2}Q^{dk}.
\]

\begin{lemma}[Genus raising]
\label{lem:genus-raising}
For every $g\ge0$ and $d\ge1$,
\begin{equation}
\boxed{\Delta_d\Phi_{g,d}=-\Phi_{g+1,d}.}
\label{eq:genus-raising}
\end{equation}
\end{lemma}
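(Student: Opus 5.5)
The plan is to compute termwise in the Novikov variable, using the diagonal action of $\Delta_d$ on monomials. The operator $\Delta_d=T_{h/d}-2+T_{-h/d}$ is linear and acts on the $t$-variable only. Since $\Phi_{g,d}$ is a formal power series in $Q=e^{2\pi it}$ whose coefficients depend only on $\lambda$, we may apply $\Delta_d$ coefficient by coefficient. No convergence issue arises: at fixed $\lambda$, the shifts act on each $Q^{dk}$ separately and preserve the $Q$-grading.

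The key step is the eigenvalue computation already recorded before the lemma. With $h=\lambda/(2\pi)$ we have
\[
T_{\pm h/d}Q^{dk}=e^{\pm 2\pi i\,dk\,h/d}Q^{dk}=e^{\pm ik\lambda}Q^{dk}.
\]
Hence
\[
\Delta_dQ^{dk}=\bigl(e^{ik\lambda}-2+e^{-ik\lambda}\bigr)Q^{dk}=(2\cos k\lambda-2)Q^{dk}=-4\sin^2\frac{k\lambda}{2}\,Q^{dk}.
\]
The degree normalization $h/d$ is exactly what makes the eigenvalue depend on the multiple-cover index $k$ rather than on the total degree $dk$. This is why $\Delta_d$ must be adapted to the degree $d$ of the sector.

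Next we apply this to each summand of \eqref{eq:Phi}:
\[
\Delta_d\Phi_{g,d}=\sum_{k\ge1}\frac{Q^{dk}}{k}\Bigl(2\sin\frac{k\lambda}{2}\Bigr)^{2g-2}\cdot\Bigl(-\Bigl(2\sin\frac{k\lambda}{2}\Bigr)^{2}\Bigr)=-\sum_{k\ge1}\frac{Q^{dk}}{k}\Bigl(2\sin\frac{k\lambda}{2}\Bigr)^{2g}.
\]
The right-hand side is $-\Phi_{g+1,d}$ by definition.

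The only point needing care, which I view as the mild obstacle, is the case $g=0$. There the factor $(2\sin\frac{k\lambda}{2})^{-2}$ has poles. I would handle this by working with $\lambda$ generic, so that no $\sin(k\lambda/2)$ vanishes, or in Laurent series in $\lambda$. The identity is then an identity of formal $Q$-series with meromorphic coefficients in $\lambda$, and the multiplication cancels the pole exactly. For $g=0$ the result is the holomorphic series $-\sum_k Q^{dk}/k=\log(1-Q^d)$, consistent with $\Phi_{1,d}$.
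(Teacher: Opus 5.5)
Your proposal is correct and is the paper's own argument: the paper proves the lemma by applying the multiplier $\Delta_dQ^{dk}=-4\sin^2\frac{k\lambda}{2}\,Q^{dk}$ termwise in the definition of $\Phi_{g,d}$, as you do. Your eigenvalue computation and your $g=0$ check against $\Phi_{1,d}=\Li_1(Q^d)$ are both right.
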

\begin{proof}
Apply the preceding multiplier termwise in \eqref{eq:Phi}.
\end{proof}

For $g=0$ this gives
\[
\Delta_d\Phi_{0,d}=-\Phi_{1,d}=-\Li_1(Q^d).
\]
Thus the genus zero equation is the first step in a discrete genus ladder.

We now recall the cover polynomial calculus. The first polynomials are
\[
P_1(y)=y,
\qquad
P_2(y)=4y-y^2,
\qquad
P_3(y)=9y-6y^2+y^3,
\]
\[
P_4(y)=16y-20y^2+8y^3-y^4.
\]

\begin{lemma}[Cover polynomial calculus]
\label{lem:Pcalc}
For all positive integers $a,b,m$:
\begin{enumerate}[label=(\roman*)]
\item $P_m(y)\in y\ZZ[y]$;
\item $P_a(P_b(y))=P_{ab}(y)$;
\item $P_m(y)=m^2y+O(y^2)$;
\item $\mathcal F_m(y)=P_m(y)/y\in\ZZ[y]$ and $\mathcal F_m(0)=m^2$.
\end{enumerate}
\end{lemma}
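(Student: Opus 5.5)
The plan is to work from the defining identity $P_m(4\sin^2 t)=4\sin^2(mt)$ together with the Chebyshev description $P_m(y)=2-2T_m(1-y/2)$. Since the map $t\mapsto 4\sin^2 t$ has infinite image, a polynomial identity in $y$ can be verified on values $y=4\sin^2 t$. First I will establish integrality and the vanishing of constant terms. This follows from the Chebyshev recursion. Write $c=1-y/2=\cos 2t$. Then $T_{m+1}(c)=2cT_m(c)-T_{m-1}(c)$, which translates into
\[
P_{m+1}(y)=(2-y)P_m(y)-P_{m-1}(y)+2y,
\]
with $P_0=0$ and $P_1=y$. One checks this directly: substitute $T_j=1-P_j/2$ and simplify. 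Induction on $m$ then gives $P_m\in y\ZZ[y]$, which is (i). The first part of (iv), $\mathcal F_m=P_m/y\in\ZZ[y]$, is immediate from (i).

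For (ii), evaluate at $y=4\sin^2 t$. By definition $P_b(y)=4\sin^2(bt)$, so
\[
P_a(P_b(y))=P_a\bigl(4\sin^2(bt)\bigr)=4\sin^2(abt)=P_{ab}(y).
\]
Both sides are polynomials agreeing on the infinite set $\{4\sin^2 t\}=[0,4]$, hence they are equal. Alternatively, (ii) is the classical identity $T_a\circ T_b=T_{ab}$, transported through the affine change of variables $y\mapsto 1-y/2$.

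For (iii) and the rest of (iv), compare leading behaviour as $t\to0$. We have $y=4\sin^2 t=4t^2+O(t^4)$ and $4\sin^2(mt)=4m^2t^2+O(t^4)$. Write $P_m(y)=c_1y+c_2y^2+\cdots$, which has no constant term by (i). Then $P_m(4\sin^2t)=4c_1t^2+O(t^4)$, so $c_1=m^2$. This gives (iii), and hence $\mathcal F_m(0)=m^2$. One could instead read $c_1$ off the recursion: the linear coefficients $\ell_m$ satisfy $\ell_{m+1}=2\ell_m-\ell_{m-1}+2$, with solution $\ell_m=m^2$.

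No step is a real obstacle. The only point needing care is the recursion algebra in (i), or equivalently citing integrality of $T_m$ together with the substitution $c=1-y/2$. The latter route gives coefficients in $\ZZ[1/2]$ a priori, so the recursion argument is the cleaner way to obtain integrality over $\ZZ$.
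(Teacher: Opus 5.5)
Your proof is correct and follows the paper's argument: the composition law is the Chebyshev identity $T_a\circ T_b=T_{ab}$, of which your evaluation at $y=4\sin^2 t$ is a restatement, and the linear coefficient comes from the small-angle expansion of $4\sin^2(mt)$ against $4\sin^2 t$. The only difference is that the paper cites Bryan--Pandharipande for integrality, whereas you prove it directly from the recursion $P_{m+1}=(2-y)P_m-P_{m-1}+2y$; the recursion checks out and makes the lemma self-contained.
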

\begin{proof}
The integrality is the standard Bryan-Pandharipande cover polynomial property \cite{BryanPandharipande2001}. The composition law follows from $T_a(T_b(x))=T_{ab}(x)$. The linear term follows by expanding $4\sin^2(mt)$ in $4\sin^2t$.
\end{proof}

If $y=4\sin^2t$, then
\begin{equation}
\mathcal F_m(y)=\left(\frac{\sin mt}{\sin t}\right)^2.
\label{eq:FejerTrig}
\end{equation}
Define
\[
\Psi_mf(y)=f(P_m(y)).
\]
Then $\Psi_a\Psi_b=\Psi_{ab}$ and
\begin{equation}
\Psi_k(\mathcal F_d)=\frac{P_{dk}(y)}{P_k(y)}.
\label{eq:PsiF}
\end{equation}

The all genus potential admits a compact polynomial form.

\begin{lemma}
\label{lem:potentialB}
One has
\begin{equation}
\boxed{
F^{\mathrm{BPS}}(Q,y)
=
\sum_{d\ge1}\sum_{k\ge1}\frac{Q^{dk}}{k}
\frac{B_d(P_k(y))}{P_k(y)}.
}
\label{eq:potentialB}
\end{equation}
\end{lemma}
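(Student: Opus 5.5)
The plan is a direct substitution into the definition of $F^{\mathrm{BPS}}$ as $\sum_{d,g} n_{g,d}\Phi_{g,d}$, followed by interchanging the finite genus sum with the sum over $k$.

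First I rewrite the trigonometric factor in terms of $y$. With $y=4\sin^2(\lambda/2)$, the defining property \eqref{eq:Pintro} of the cover polynomials, applied with $t=\lambda/2$ and $m=k$, gives
\[
\left(2\sin\frac{k\lambda}{2}\right)^{2}=4\sin^2\frac{k\lambda}{2}=P_k(y).
\]
Hence for every $g\ge0$,
\[
\left(2\sin\frac{k\lambda}{2}\right)^{2g-2}=P_k(y)^{g-1}=\frac{P_k(y)^g}{P_k(y)}.
\]
This is an identity of meromorphic functions of $\lambda$, or of formal Laurent series in $y$. By Lemma~\ref{lem:Pcalc}(iii), $P_k(y)=k^2y(1+O(y))$ is nonzero, so division by it is legitimate in $\QQ((y))$.

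Next I substitute this into \eqref{eq:Phi}. That gives
\[
\Phi_{g,d}=\sum_{k\ge1}\frac{Q^{dk}}{k}\,\frac{P_k(y)^g}{P_k(y)}.
\]
Summing against $n_{g,d}$ for a fixed degree $d$ uses the standing hypothesis that only finitely many $n_{g,d}$ are nonzero in each degree. So the sum over $g$ is finite and can be moved inside the $k$-sum coefficientwise in $Q$. The inner sum $\sum_g n_{g,d}P_k(y)^g$ is exactly $B_d(P_k(y))$ by \eqref{eq:Bintro}, and this yields \eqref{eq:potentialB}.

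The only point needing care is the meaning of convergence of the double sum. I will regard both sides as formal power series in $Q$ with coefficients in $\QQ((y))$. For a fixed power $Q^N$, only the finitely many pairs $(d,k)$ with $dk=N$ contribute, and each contributes a finite genus sum. All rearrangements are therefore finite, and no analytic estimate is needed.
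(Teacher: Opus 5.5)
Your proposal is correct and is essentially the paper's proof. The paper's argument is the single observation that, for fixed $(d,k)$, $\sum_{g}n_{g,d}P_k(y)^{g-1}=B_d(P_k(y))/P_k(y)$, using $\bigl(2\sin\frac{k\lambda}{2}\bigr)^2=P_k(y)$; your remarks on the finiteness of the genus sum and the coefficientwise interpretation in $Q$ only make explicit what the paper leaves implicit.
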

\begin{proof}
For fixed $(d,k)$,
\[
\sum_{g\ge0}n_{g,d}P_k(y)^{g-1}=\frac{B_d(P_k(y))}{P_k(y)}.
\]
\end{proof}

Now define the common central difference
\[
\Delta=T_h-2+T_{-h}.
\]
On $Q^N$,
\begin{equation}
\Delta Q^N=-P_N(y)Q^N.
\label{eq:commonmode}
\end{equation}
Write
\[
R(Q,y)=\Delta F^{\mathrm{BPS}}(Q,y)=\sum_{N\ge1}R_N(y)Q^N,
\qquad
H_N=-NR_N.
\]

\begin{proposition}[Common all genus forcing]
\label{prop:common-forcing}
For every $N\ge1$,
\begin{equation}
\boxed{
R_N(y)
=
-\frac{P_N(y)}{N}
\sum_{d\mid N}d\,
\frac{B_d(P_{N/d}(y))}{P_{N/d}(y)}.
}
\label{eq:commonforcing}
\end{equation}
\end{proposition}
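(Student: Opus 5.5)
The plan is to start from the polynomial form of the potential in Lemma~\ref{lem:potentialB} and apply the common central difference termwise, using the diagonal action \eqref{eq:commonmode} on Novikov modes. Before doing so, I will record that $\Delta$ acts only on the Novikov variable $Q=e^{2\pi i t}$ through the shift in $t$. The variable $y=(2\sin\frac{\lambda}{2})^2$ depends only on $\lambda$, i.e.\ on $h$, which is fixed. Hence each coefficient $B_d(P_k(y))/P_k(y)$ is a constant for $\Delta$, and the operator passes through it.

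Next I will regroup \eqref{eq:potentialB} by total Novikov degree. For fixed $N$, the pairs $(d,k)$ with $dk=N$ are parametrized by divisors $d\mid N$ with $k=N/d$. So the coefficient of $Q^N$ in $F^{\mathrm{BPS}}$ is
\[
\sum_{d\mid N}\frac{1}{N/d}\,\frac{B_d(P_{N/d}(y))}{P_{N/d}(y)}
=\frac1N\sum_{d\mid N}d\,\frac{B_d(P_{N/d}(y))}{P_{N/d}(y)}.
\]
This is a finite sum in each degree, since only finitely many $n_{g,d}$ are nonzero for each $d$ by the standing finiteness assumption. Applying \eqref{eq:commonmode}, the coefficient is multiplied by $-P_N(y)$, which yields \eqref{eq:commonforcing} directly.

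The only point needing care is the legitimacy of the termwise manipulation. I would treat everything as a formal power series in $Q$ whose coefficients are Laurent polynomials in $y$, or trigonometric functions of $\lambda$. In this setting the rearrangement by $N=dk$ is a finite regrouping in each $Q$-degree, and $\Delta$ is defined coefficientwise, so no convergence issue arises.

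It is also worth noting, though it is not needed for the statement, that $P_N/P_{N/d}=\Psi_{N/d}(\mathcal F_d)$ by \eqref{eq:PsiF}. This is a polynomial, so each summand of $R_N$ lies in $\ZZ[y]$ and the genus-zero pole cancels. This observation is the bridge to Theorem~\ref{thm:intro-ghost}. I do not expect a genuine obstacle: the main step is simply the bookkeeping $1/k=d/N$ in the regrouping.
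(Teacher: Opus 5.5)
Your proof is correct and is essentially the paper's argument: regroup the $Q^N$ coefficient of Lemma~\ref{lem:potentialB} over pairs with $dk=N$, using $1/k=d/N$, and then multiply by the eigenvalue $-P_N(y)$ from \eqref{eq:commonmode}. One small slip in your closing aside: the summands of $R_N$ carry the factor $d/N$, so they lie only in $\QQ[y]$. It is the summands of $H_N=-NR_N$ that lie in $\ZZ[y]$ when $B_d\in\ZZ[y]$, as in Corollary~\ref{cor:regularization}.
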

\begin{proof}
The coefficient of $Q^N$ in \eqref{eq:potentialB} receives a contribution from $(d,k)$ precisely when $dk=N$, with coefficient
\[
\frac dN\frac{B_d(P_{N/d})}{P_{N/d}}.
\]
Equation \eqref{eq:commonmode} multiplies the whole $Q^N$ coefficient by $-P_N$.
\end{proof}

\begin{corollary}[Polynomial regularization]
\label{cor:regularization}
If $B_d\in A[y]$ for a commutative ring $A$, then $H_N\in A[y]$ for all $N$.
\end{corollary}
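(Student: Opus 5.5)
The plan is to start from Proposition~\ref{prop:common-forcing} and rewrite $H_N=-NR_N$ so that every term is visibly a polynomial. Multiplying \eqref{eq:commonforcing} by $-N$ gives
\[
H_N(y)=\sum_{d\mid N} d\,\frac{P_N(y)}{P_{N/d}(y)}\,B_d\bigl(P_{N/d}(y)\bigr).
\]
Set $k=N/d$. I will use the composition law of Lemma~\ref{lem:Pcalc}(ii) in the form $P_N=P_{dk}=P_d\circ P_k$. With \eqref{eq:PsiF}, this says
\[
\frac{P_N(y)}{P_k(y)}=\frac{P_d(P_k(y))}{P_k(y)}=\mathcal F_d(P_k(y))=\Psi_k(\mathcal F_d)(y).
\]
Each summand therefore equals $d\,\Psi_k(\mathcal F_d B_d)$. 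This is exactly the ghost formula of Theorem~\ref{thm:intro-ghost}, read as an identity of formal expressions in $y$.

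Next I check that each summand lies in $A[y]$. By Lemma~\ref{lem:Pcalc}(iv), $\mathcal F_d\in\ZZ[y]$, so $\mathcal F_dB_d\in A[y]$ whenever $B_d\in A[y]$. By (i), $P_k\in\ZZ[y]$. Substituting an integral polynomial into a polynomial with coefficients in $A$ gives a polynomial in $A[y]$, so $\Psi_k(\mathcal F_dB_d)\in A[y]$. The factor $d$ is an integer, so it acts on $A$ through $\ZZ\to A$. The sum over $d\mid N$ is finite, and hence $H_N\in A[y]$.

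The only point needing care is that $P_N/P_{N/d}$ and $B_d(P_k)/P_k$ are a priori rational functions (or Laurent series in $\lambda$) and not polynomials. The pole $1/P_k$ coming from genus zero must cancel against the numerator before we can speak of membership in $A[y]$. I handle this by doing the computation inside $A[y][1/y]$, or by working with $\lambda$-series when $A$ is a $\QQ$-algebra and then noting that the final expression is manifestly polynomial. The cancellation itself is the purely polynomial identity $P_d(P_k)=P_k\cdot\mathcal F_d(P_k)$, which holds in $\ZZ[y]$ because $P_d=y\,\mathcal F_d$. So no division is ever performed in $A$. I expect this bookkeeping, namely interpreting the identity of Proposition~\ref{prop:common-forcing} over a general ring $A$, to be the only real obstacle. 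I will resolve it by taking $H_N:=\sum_{d\mid N} d\,\Psi_{N/d}(\mathcal F_dB_d)$ as the definition over $A$, which agrees with $-NR_N$ after the substitution $y=4\sin^2(\lambda/2)$.
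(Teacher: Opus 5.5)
Your proof is correct and is essentially the paper's argument: the composition law $P_N=P_d\circ P_{N/d}$ gives $P_N/P_{N/d}=\mathcal F_d(P_{N/d})\in\ZZ[y]$, so each summand becomes $d\,\Psi_{N/d}(\mathcal F_dB_d)\in A[y]$. One minor slip in your bookkeeping remark: $1/P_k$ does not lie in $A[y][1/y]$ (for example, $\mathcal F_2=4-y$ is not a unit there), so the correct ambient ring is the localization of $A[y]$ at the $P_m$, which are nonzerodivisors because their leading coefficients are $\pm1$. Your fallback of taking the ghost formula as the definition of $H_N$ over $A$ makes this unnecessary anyway.
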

\begin{proof}
If $k=N/d$, then $P_N=P_d(P_k)$ and hence
\[
\frac{P_N}{P_k}=\mathcal F_d(P_k)\in\ZZ[y].
\]
\end{proof}

\begin{proof}[Proof of Theorem~\ref{thm:intro-ghost}]
From Proposition~\ref{prop:common-forcing},
\[
H_N=P_N\sum_{d\mid N}d\frac{B_d(P_{N/d})}{P_{N/d}}.
\]
Using \eqref{eq:PsiF} with $k=N/d$ gives
\[
H_N=\sum_{d\mid N}d\,\Psi_{N/d}(\mathcal F_dB_d).
\]
\end{proof}

Define the dressed BPS coordinate
\begin{equation}
A_d(y)=\mathcal F_d(y)B_d(y).
\label{eq:Ad}
\end{equation}
Then
\begin{equation}
\boxed{H_N=\sum_{d\mid N}d\,\Psi_{N/d}(A_d).}
\label{eq:undressedghost}
\end{equation}
The second arrow has the formal shape of a Witt/necklace ghost transform, whereas the first arrow is the Fej\'er dressing specific to the central difference problem.

At $y=0$,
\[
H_N(0)=\sum_{d\mid N}d^3n_{0,d},
\qquad
R_N(0)=-\frac1N\sum_{d\mid N}d^3n_{0,d}.
\]
This recovers the genus zero divisor convolution of \cite{Li2026Scalar}.

\section{Reconstruction and the BPS genus filtration}
\label{sec:reconstruction}

Define
\begin{equation}
\mathfrak M_N(H)=\sum_{r\mid N}\mu(r)\Psi_r(H_{N/r}).
\label{eq:MobiusOp}
\end{equation}

\begin{theorem}[Operator-valued Mobius inversion]
\label{thm:operatorMobius}
For every $N\ge1$,
\begin{equation}
\boxed{\mathfrak M_N(H)=NA_N.}
\label{eq:operatorMobius}
\end{equation}
Equivalently,
\begin{equation}
\boxed{
B_N(y)=\frac{1}{N\mathcal F_N(y)}
\sum_{r\mid N}\mu(r)H_{N/r}(P_r(y)).
}
\label{eq:exactBN}
\end{equation}
\end{theorem}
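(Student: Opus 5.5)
The plan is to substitute the undressed ghost formula \eqref{eq:undressedghost} into the definition \eqref{eq:MobiusOp} of $\mathfrak M_N$. Then I will use the semigroup law $\Psi_a\Psi_b=\Psi_{ab}$ from Lemma~\ref{lem:Pcalc}(ii) to collapse the double divisor sum, and finish with the scalar identity $\sum_{r\mid m}\mu(r)=[m=1]$. Concretely, for each $r\mid N$, formula \eqref{eq:undressedghost} applied at level $N/r$ gives
\[
\Psi_r(H_{N/r})=\sum_{d\mid N/r} d\,\Psi_r\Psi_{N/(rd)}(A_d)=\sum_{d\mid N/r} d\,\Psi_{N/d}(A_d).
\]
Here I use that each $\Psi_r$ is additive and commutes with multiplication by the integer $d$, and that $\Psi_r\Psi_{N/(rd)}=\Psi_{N/d}$.

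Next I multiply by $\mu(r)$ and sum over $r\mid N$. The pairs $(r,d)$ with $rd\mid N$ can be reindexed by first fixing $d\mid N$ and then letting $r$ range over the divisors of $N/d$. This yields
\[
\mathfrak M_N(H)=\sum_{d\mid N} d\,\Psi_{N/d}(A_d)\sum_{r\mid N/d}\mu(r).
\]
The inner sum vanishes unless $d=N$, in which case it equals $1$, and $\Psi_1$ is the identity. Hence $\mathfrak M_N(H)=NA_N$, which is \eqref{eq:operatorMobius}.

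For \eqref{eq:exactBN}, I substitute $A_N=\mathcal F_NB_N$ from \eqref{eq:Ad} and write $\Psi_r(H_{N/r})=H_{N/r}(P_r(y))$. Dividing by $N\mathcal F_N(y)$ is legitimate in $\QQ(y)$, or in $\QQ[[y]]$, because $\mathcal F_N(0)=N^2\neq0$ by Lemma~\ref{lem:Pcalc}(iv). So the identity $N\mathcal F_NB_N=\mathfrak M_N(H)$ determines $B_N$ uniquely.

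I do not expect a genuine obstacle. The only point requiring care is the bookkeeping in the reindexing, namely that $\Psi_r$ composed with $\Psi_{N/(rd)}$ gives exactly $\Psi_{N/d}$, independently of $r$. It is this independence that lets the Möbius sum factor out as a scalar, and it is precisely where the composition law $P_a\circ P_b=P_{ab}$ is essential.
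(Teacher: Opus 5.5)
Your proposal is correct and matches the paper's proof: substitute the ghost formula into $\mathfrak M_N$, collapse $\Psi_r\Psi_{N/(rd)}=\Psi_{N/d}$, reindex over $d\mid N$, and use $\sum_{r\mid N/d}\mu(r)=[d=N]$. Your remark that dividing by $N\mathcal F_N$ is legitimate because $\mathcal F_N(0)=N^2\neq0$ spells out the small step the paper leaves implicit for the equivalent form.
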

\begin{proof}
Substitute \eqref{eq:undressedghost} into \eqref{eq:MobiusOp} and use $\Psi_a\Psi_b=\Psi_{ab}$:
\[
\mathfrak M_N(H)
=
\sum_{d\mid N}d\,\Psi_{N/d}(A_d)
\sum_{r\mid N/d}\mu(r).
\]
Only $d=N$ survives.
\end{proof}

Work over $\QQ[[y]]$ with the filtration $F^g=y^g\QQ[[y]]$. Since $P_m(y)=m^2y+O(y^2)$, the operator $\Psi_m$ acts on $\operatorname{gr}^g_F$ by multiplication by $m^{2g}$, while multiplication by $\mathcal F_d$ acts by $d^2$.

For fixed $g$, let $H_N^{<g}$ denote the forcing reconstructed from the BPS truncations
\[
B_d^{<g}(y)=\sum_{j=0}^{g-1}n_{j,d}y^j,
\]
and define
\[
\eta_{g,N}=[y^g](H_N-H_N^{<g}).
\]

\begin{proposition}[Associated graded ghost transform]
\label{prop:gradedghost}
For every $g\ge0$,
\begin{equation}
\boxed{
\eta_{g,N}=N^{2g}\sum_{d\mid N}d^{3-2g}n_{g,d}.
}
\label{eq:etaGraded}
\end{equation}
\end{proposition}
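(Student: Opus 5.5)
The idea is to use linearity of the ghost formula \eqref{eq:undressedghost} in the BPS data. Since $H_N=\sum_{d\mid N}d\,\Psi_{N/d}(\mathcal F_dB_d)$ is $\QQ$-linear in the sequence $(B_d)$, we have
\[
H_N-H_N^{<g}=\sum_{d\mid N}d\,\Psi_{N/d}\bigl(\mathcal F_d\,(B_d-B_d^{<g})\bigr).
\]
Moreover $B_d-B_d^{<g}=n_{g,d}y^g+O(y^{g+1})$ lies in $F^g=y^g\QQ[[y]]$. So the whole difference lies in $F^g$, and its $y^g$ coefficient is exactly the image in $\operatorname{gr}^g_F$.

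Next I would compute the action on $\operatorname{gr}^g_F$ term by term, using Lemma~\ref{lem:Pcalc}.
\begin{itemize}
\item By part (iv), $\mathcal F_d=d^2+O(y)$. Hence $\mathcal F_d(B_d-B_d^{<g})=d^2n_{g,d}y^g+O(y^{g+1})$.
\item By part (iii), $P_m(y)=m^2y+O(y^2)$ with $P_m\in y\ZZ[y]$. Hence for any $f\in F^g$ with leading coefficient $c$,
\[
\Psi_m f=f(P_m(y))=c\,(m^2y+O(y^2))^g+O\bigl(P_m^{g+1}\bigr)=c\,m^{2g}y^g+O(y^{g+1}).
\]
This works because $P_m^{g+1}\in y^{g+1}\QQ[[y]]$. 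So $\Psi_m$ preserves $F^{g+1}$ and acts on $\operatorname{gr}^g_F$ by $m^{2g}$.
\end{itemize}

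Combining the two, the $y^g$ coefficient of $d\,\Psi_{N/d}(\mathcal F_d(B_d-B_d^{<g}))$ is $d\cdot (N/d)^{2g}\cdot d^2 n_{g,d}=N^{2g}d^{3-2g}n_{g,d}$. Summing over $d\mid N$ gives \eqref{eq:etaGraded}.

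The only point needing care is the well-definedness of $\Psi_m$ on $\QQ[[y]]$ and its compatibility with the filtration. Both follow from $P_m$ having zero constant term, so substitution is continuous in the $y$-adic topology. Beyond that the argument is a direct leading-order bookkeeping, so I expect no real obstacle.
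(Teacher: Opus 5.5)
Your proof is correct and follows the same argument as the paper. The paper's one-line proof is exactly this leading-order bookkeeping: a factor $d^2$ from $\mathcal F_d$, a factor $(N/d)^{2g}$ from $\Psi_{N/d}$, and the outer factor $d$. You additionally make explicit the linearity in $(B_d)$ and the fact that $\Psi_m$ preserves the $y$-adic filtration, both of which the paper leaves implicit.
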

\begin{proof}
The pure genus-$g$ term $n_{g,d}y^g$ acquires the factor $d^2$ from $\mathcal F_d$, the factor $(N/d)^{2g}$ from $\Psi_{N/d}$, and the outer ghost factor $d$.
\end{proof}

\begin{theorem}[Associated graded reconstruction]
\label{thm:gradedrecon}
For every $g\ge0$,
\begin{equation}
\boxed{
n_{g,N}=\frac1{N^3}\sum_{r\mid N}\mu(r)r^{2g}\eta_{g,N/r}.
}
\label{eq:etaRecon}
\end{equation}
\end{theorem}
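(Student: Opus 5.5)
The plan is to invert the weighted divisor convolution of Proposition~\ref{prop:gradedghost} by ordinary Möbius inversion, after absorbing the powers of $N$. Fix $g\ge0$ and define the arithmetic function
\[
f_g(d)=d^{3-2g}n_{g,d}.
\]
Proposition~\ref{prop:gradedghost} then reads
\[
\eta_{g,N}=N^{2g}\sum_{d\mid N}f_g(d).
\]
Setting $\tilde\eta_{g,N}=N^{-2g}\eta_{g,N}$, we get $\tilde\eta_{g,N}=\sum_{d\mid N}f_g(d)$, which is a plain Dirichlet convolution of $f_g$ with the constant function $1$.

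Classical Möbius inversion then gives
\[
f_g(N)=\sum_{r\mid N}\mu(r)\,\tilde\eta_{g,N/r}=\sum_{r\mid N}\mu(r)\Bigl(\frac{r}{N}\Bigr)^{2g}\eta_{g,N/r}.
\]
Multiplying through by $N^{2g-3}$ recovers $n_{g,N}$:
\[
n_{g,N}=N^{2g-3}f_g(N)=\frac{1}{N^3}\sum_{r\mid N}\mu(r)\,r^{2g}\,\eta_{g,N/r},
\]
which is exactly \eqref{eq:etaRecon}.

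Alternatively, one could derive the same identity from Theorem~\ref{thm:operatorMobius} by passing to $\operatorname{gr}^g_F$. On that graded piece, $\Psi_r$ acts by $r^{2g}$ and $\mathcal F_N$ acts by $N^2$, so \eqref{eq:exactBN} yields $N\cdot N^2 n_{g,N}=\sum_{r\mid N}\mu(r)r^{2g}\eta_{g,N/r}$. I would mention this as a consistency check, but I would use the elementary route as the actual proof.

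The only point needing care, and the main (mild) obstacle, is that $\eta_{g,N}$ is well defined as the genus-$g$ graded piece. The difference $H_N-H_N^{<g}$ comes from the BPS terms of genus $\ge g$. These lie in $F^g$ because $\Psi_m$ and multiplication by $\mathcal F_d$ preserve the filtration (Lemma~\ref{lem:Pcalc}(iii),(iv)). Hence its $y^g$ coefficient depends only on the pure genus-$g$ terms, which is exactly what Proposition~\ref{prop:gradedghost} computes. Granting that proposition as stated, the remaining argument is purely arithmetic.
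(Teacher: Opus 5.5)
Your proof is correct and matches the paper's argument: divide the identity of Proposition~\ref{prop:gradedghost} by $N^{2g}$, apply ordinary M\"obius inversion to the divisor sum of $d^{3-2g}n_{g,d}$, and rescale by $N^{2g-3}$. Your consistency check via Theorem~\ref{thm:operatorMobius} on $\operatorname{gr}^g_F$ is also valid, but it is not needed.
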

\begin{proof}
Divide \eqref{eq:etaGraded} by $N^{2g}$ and apply ordinary Mobius inversion.
\end{proof}

In terms of the original forcing, define
\[
\rho_{g,N}=[y^g](R_N-R_N^{<g}).
\]
Since $\eta_{g,N}=-N\rho_{g,N}$, we obtain
\begin{equation}
\boxed{
\rho_{g,N}
=
-N^{2g-1}\sum_{d\mid N}d^{3-2g}n_{g,d},
}
\label{eq:rhoGraded}
\end{equation}
\begin{equation}
\boxed{
n_{g,N}
=
-\frac1{N^2}\sum_{r\mid N}\mu(r)r^{2g-1}\rho_{g,N/r}.
}
\label{eq:rhoRecon}
\end{equation}
The outer factor $N^{-2}$ is independent of genus; only the Mobius weight changes.

\begin{theorem}[Quantum jet reconstruction]
\label{thm:quantumjet}
For every $G\ge0$, the following data are equivalent:
\begin{enumerate}[label=(\roman*)]
\item $\{H_N(y)\bmod y^{G+1}\}_{N\ge1}$;
\item $\{B_N(y)\bmod y^{G+1}\}_{N\ge1}$;
\item $\{n_{g,N}:0\le g\le G,\ N\ge1\}$.
\end{enumerate}
\end{theorem}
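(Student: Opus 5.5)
The plan is to prove the cycle of implications (iii)$\Rightarrow$(ii)$\Rightarrow$(i)$\Rightarrow$(iii), working throughout in $\QQ[[y]]$ modulo the ideal $F^{G+1}=y^{G+1}\QQ[[y]]$. The key structural fact, from Lemma~\ref{lem:Pcalc}, is that $P_m\in y\ZZ[y]$. Consequently each composition operator $\Psi_m$ maps $F^g$ into $F^g$, so it descends to a well-defined ring endomorphism of $\QQ[y]/(y^{G+1})$. Multiplication by $\mathcal F_d$ also preserves the filtration.

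The implication (iii)$\Leftrightarrow$(ii) is immediate from the definition \eqref{eq:Bintro}: the truncation $B_N\bmod y^{G+1}$ is exactly the list $n_{0,N},\dots,n_{G,N}$.

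For (ii)$\Rightarrow$(i), I use the ghost formula of Theorem~\ref{thm:intro-ghost}, namely $H_N=\sum_{d\mid N}d\,\Psi_{N/d}(\mathcal F_dB_d)$. Every operation on the right side preserves $F^{G+1}$. Hence $H_N\bmod y^{G+1}$ depends only on the truncations $B_d\bmod y^{G+1}$ for $d\mid N$.

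For (i)$\Rightarrow$(ii), I use Theorem~\ref{thm:operatorMobius}. The operator $\mathfrak M_N$ is again built from the filtration-preserving maps $\Psi_r$. Therefore
\[
N A_N\bmod y^{G+1}=\mathfrak M_N(H)\bmod y^{G+1}
\]
is determined by the truncations $H_{N/r}\bmod y^{G+1}$. It remains to recover $B_N\bmod y^{G+1}$ from $A_N=\mathcal F_NB_N$. Here I use $\mathcal F_N(0)=N^2\neq0$, so $\mathcal F_N$ is a unit in $\QQ[[y]]$. Multiplying by its inverse power series also preserves $F^{G+1}$, which gives
\[
B_N\equiv \mathcal F_N^{-1}A_N \pmod{y^{G+1}}.
\]

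As an alternative, more explicit route to (i)$\Rightarrow$(iii), I could induct on $g\le G$ using Theorem~\ref{thm:gradedrecon}. The step would be that $\eta_{g,N}=[y^g](H_N-H_N^{<g})$ requires only $[y^g]H_N$ together with the already reconstructed $n_{j,d}$ for $j<g$. This is because $H_N^{<g}$ is computed from the $B^{<g}$ via the ghost formula.

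The only point requiring care is that truncation commutes with the operations. This holds for the $\Psi_m$ exactly because $P_m(0)=0$: substituting a series with no constant term cannot lower the $y$-adic order. I expect no genuine obstacle beyond this observation.
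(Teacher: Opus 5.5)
Your proposal is correct and follows the same route as the paper. The forward direction uses the fact that $\Psi_m$ and multiplication by $\mathcal F_d$ preserve the $y$-adic filtration. The converse uses Theorem~\ref{thm:operatorMobius} to recover $\mathcal F_N B_N$, together with the invertibility of $\mathcal F_N$ in $\QQ[[y]]$ since $\mathcal F_N(0)=N^2\neq0$; you simply make explicit the truncation-compatibility and the (ii)$\Leftrightarrow$(iii) step that the paper leaves implicit.
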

\begin{proof}
The forward transform preserves the filtration. Conversely, Theorem~\ref{thm:operatorMobius} reconstructs $\mathcal F_NB_N$, and $\mathcal F_N(0)=N^2\ne0$, so $\mathcal F_N$ is a unit in $\QQ[[y]]$.
\end{proof}

For reference, the cover polynomial expansion is
\begin{equation}
P_m(y)=m^2y-\frac{m^2(m^2-1)}{12}y^2
+\frac{m^2(m^2-1)(m^2-4)}{360}y^3+O(y^4).
\label{eq:Pexpansion}
\end{equation}
Thus
\[
\mathcal F_m(y)=m^2-\frac{m^2(m^2-1)}{12}y
+\frac{m^2(m^2-1)(m^2-4)}{360}y^2+O(y^3).
\]

At genus zero,
\[
n_{0,N}=\frac1{N^3}\sum_{r\mid N}\mu(r)H_{N/r}(0)
=-\frac1{N^2}\sum_{r\mid N}\frac{\mu(r)}rR_{N/r}(0).
\]
At genus one, if $h_{1,N}=[y]H_N$, define
\[
\eta_{1,N}
=
h_{1,N}
+\frac{N^2}{12}\sum_{d\mid N}d(d^2-1)n_{0,d}.
\]
Then
\[
n_{1,N}=\frac1{N^3}\sum_{r\mid N}\mu(r)r^2\eta_{1,N/r}.
\]
At genus two, if $h_{2,N}=[y^2]H_N$, define
\begin{align*}
\eta_{2,N}
={}&h_{2,N}
+\frac{N^2(N^2-1)}{12}\sum_{d\mid N}d\,n_{1,d}\\
&-\frac{N^2}{720}\sum_{d\mid N}
\frac{d^2-1}{d}
\left[N^2(2d^2-3)-5d^2\right]n_{0,d}.
\end{align*}
Then
\[
n_{2,N}=\frac1{N^3}\sum_{r\mid N}\mu(r)r^4\eta_{2,N/r}.
\]
These explicit formulas illustrate the triangular contamination by lower genera; the associated graded formula \eqref{eq:etaRecon} is uniform in $g$.

\section{Arithmetic characterization of integral BPS forcing}
\label{sec:integrality}

We now characterize the integral image of the transform. First forget the Fej\'er dressing and consider integral polynomials $A_d\in\ZZ[y]$ with
\begin{equation}
H_N=\sum_{d\mid N}d\,\Psi_{N/d}(A_d).
\label{eq:undressedSec4}
\end{equation}
Then
\[
\mathfrak M_N(H)=NA_N.
\]

\begin{lemma}[Frobenius congruence]
\label{lem:frob}
For every prime $p$,
\begin{equation}
\boxed{P_p(y)\equiv y^p\pmod p.}
\label{eq:PpFrob}
\end{equation}
Consequently, for $f\in\ZZ[y]$,
\[
\Psi_p(f)\equiv f^p\pmod p.
\]
\end{lemma}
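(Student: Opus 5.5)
The plan is to prove the congruence $P_p(y)\equiv y^p\pmod p$ directly from the Chebyshev description $P_m(y)=2-2T_m(1-y/2)$, or better, from a Laurent-polynomial model that avoids the denominators. Put $y=2-x-x^{-1}$, which corresponds to $x=e^{2it}$, since then $4\sin^2 t=2-x-x^{-1}$. The defining relation \eqref{eq:Pintro} becomes
\[
P_m(2-x-x^{-1})=2-x^m-x^{-m}.
\]
Both sides are Laurent polynomials with integer coefficients. The map $\ZZ[y]\to\ZZ[x,x^{-1}]$, $y\mapsto 2-x-x^{-1}$, is injective, and it stays injective after reducing mod $p$, because $2-x-x^{-1}$ is transcendental over $\mathbb F_p$ in $\mathbb F_p[x,x^{-1}]$. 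It therefore suffices to compare images in $\mathbb F_p[x,x^{-1}]$.

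In characteristic $p$ the Frobenius is additive, so
\[
(2-x-x^{-1})^p\equiv 2^p-x^p-x^{-p}\equiv 2-x^p-x^{-p}\pmod p,
\]
using Fermat's little theorem $2^p\equiv 2$. Hence the image of $y^p$ agrees mod $p$ with the image of $P_p(y)$. By the injectivity noted above, $P_p(y)\equiv y^p$ in $\mathbb F_p[y]$, which is \eqref{eq:PpFrob}.

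For the consequence, let $f(y)=\sum c_j y^j\in\ZZ[y]$. Then
\[
\Psi_p f=\sum c_jP_p(y)^j\equiv \sum c_j y^{pj}\pmod p.
\]
On the other hand, Frobenius in $\mathbb F_p[y]$ gives
\[
f(y)^p\equiv\sum c_j^p y^{pj}\equiv \sum c_j y^{pj},
\]
again by Fermat, $c_j^p\equiv c_j$. So $\Psi_p f\equiv f^p\pmod p$.

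The only point needing care is the injectivity of the substitution mod $p$. I would check it by leading terms: a nonzero polynomial of degree $n$ in $y$ maps to a Laurent polynomial whose $x^{n}$ coefficient is $(-1)^n$ times its leading coefficient, which is nonzero mod $p$. Alternatively, one can avoid injectivity altogether. Define $P_m$ over $\ZZ$ by the Laurent identity, which characterizes it uniquely because the substitution is injective over $\ZZ$, and note that reduction mod $p$ commutes with substitution.
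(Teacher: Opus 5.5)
Your proposal is correct and follows essentially the same route as the paper, which substitutes $y=2-z-z^{-1}$ and uses the Frobenius identity $(2-z-z^{-1})^p\equiv 2-z^p-z^{-p}\pmod p$. Your leading-term check of injectivity modulo $p$ and your coefficientwise derivation of $\Psi_p(f)\equiv f^p$ supply steps the paper leaves implicit. One small caveat: your closing ``alternatively'' remark does not actually avoid injectivity of the substitution modulo $p$, so keep the leading-term argument.
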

\begin{proof}
Introduce $z$ by $y=2-z-z^{-1}$. Then $P_p(y)=2-z^p-z^{-p}$, while
\[
(2-z-z^{-1})^p\equiv2-z^p-z^{-p}\pmod p.
\]
\end{proof}

\begin{theorem}[Chebyshev-Dwork ghost criterion]
\label{thm:Dwork}
For $H_N\in\ZZ[y]$, the following are equivalent:
\begin{enumerate}[label=(\roman*)]
\item there exists a unique sequence $A_N\in\ZZ[y]$ satisfying \eqref{eq:undressedSec4};
\item for every prime $p\mid N$,
\begin{equation}
\boxed{
H_N-\Psi_p(H_{N/p})\in p^{v_p(N)}\ZZ[y].
}
\label{eq:Dwork}
\end{equation}
\end{enumerate}
\end{theorem}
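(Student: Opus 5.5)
The plan is to treat the equivalence as a purely formal statement about the ghost-type transform \eqref{eq:undressedSec4}. I will use only two facts: the composition law $\Psi_a\Psi_b=\Psi_{ab}$, and the fact that each $\Psi_m$ preserves $\ZZ[y]$ (Lemma~\ref{lem:Pcalc}(i)). Uniqueness is immediate in all cases. Over $\QQ[y]$ the sequence $A_N$ is forced by Theorem~\ref{thm:operatorMobius}, since $NA_N=\mathfrak M_N(H)$. So only existence of an \emph{integral} solution is at stake.

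\emph{Step 1: (i)$\Rightarrow$(ii).} Fix $N$ and a prime $p\mid N$, and write $a=v_p(N)$. Applying $\Psi_p$ to the formula for $H_{N/p}$ and using $\Psi_p\Psi_{N/(pd)}=\Psi_{N/d}$ gives
\[
\Psi_p(H_{N/p})=\sum_{d\mid N/p} d\,\Psi_{N/d}(A_d).
\]
Subtracting this from \eqref{eq:undressedSec4} leaves exactly the divisors $d\mid N$ with $d\nmid N/p$, that is, those with $v_p(d)=a$:
\[
H_N-\Psi_p(H_{N/p})=\sum_{\substack{d\mid N\\ v_p(d)=a}} d\,\Psi_{N/d}(A_d).
\]
Each term on the right has $p^a\mid d$ and $\Psi_{N/d}(A_d)\in\ZZ[y]$. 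Hence the whole difference lies in $p^a\ZZ[y]$, which is \eqref{eq:Dwork}.

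\emph{Step 2: (ii)$\Rightarrow$(i).} I will argue by strong induction on $N$. Assume $A_d\in\ZZ[y]$ has been constructed for all $d<N$, and define
\[
NA_N:=H_N-\sum_{d\mid N,\ d<N} d\,\Psi_{N/d}(A_d)\in\ZZ[y].
\]
It remains to show $NA_N\in N\ZZ[y]$. Since $\ZZ[y]$ is a free $\ZZ$-module, it suffices to check $NA_N\in p^{a}\ZZ[y]$ for each prime power $p^a\,\|\,N$ separately.

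For such $p$, the identity from Step~1 holds verbatim, because it used only the defining relations for indices dividing $N$, which are available by the inductive hypothesis together with the definition of $A_N$. Rearranging that identity gives
\[
NA_N=\bigl(H_N-\Psi_p(H_{N/p})\bigr)-\sum_{\substack{d\mid N,\ d<N\\ v_p(d)=a}} d\,\Psi_{N/d}(A_d).
\]
The first bracket lies in $p^a\ZZ[y]$ by hypothesis (ii). Each remaining term has $p^a\mid d$ and $A_d$ integral by induction, so it also lies in $p^a\ZZ[y]$. Therefore $p^a$ divides $NA_N$ for every prime power $p^a\,\|\,N$, and $A_N\in\ZZ[y]$.

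\emph{Expected difficulty.} The only step needing care is the bookkeeping in Step~2. One must verify that the identity $H_N-\Psi_p(H_{N/p})=\sum_{v_p(d)=a}d\,\Psi_{N/d}(A_d)$ is legitimate at the inductive stage, where $A_N$ is only known to be rational. One must also confirm that every $d<N$ with $v_p(d)=a$ indeed carries a factor $p^a$. The Frobenius congruence of Lemma~\ref{lem:frob} is not needed for this equivalence itself. Its role is to make \eqref{eq:Dwork} a genuinely nontrivial, checkable constraint: the $a=1$ case says $H_N\equiv H_{N/p}^p\pmod p$. It also allows one to rewrite the congruences in the usual Dwork form $H_{p^a m}\equiv\Psi_p(H_{p^{a-1}m})\pmod{p^a}$, which in prime power degree is the form quoted in Theorem~\ref{thm:intro-integrality}.
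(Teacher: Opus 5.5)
Your proof is correct. Your Step~1 is the paper's forward direction verbatim. For the converse you take a genuinely different route.

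The paper does not induct. It works with the M\"obius expression $M_N=\mathfrak M_N(H)$, which equals $NA_N$ for the unique rational solution. It splits the squarefree divisors of $N=p^am$ (with $p\nmid m$) into pairs $r=s$ and $r=ps$ with $s\mid m$, which gives the closed formula
\[
M_N=\sum_{s\mid m}\mu(s)\,\Psi_s\bigl[H_{N/s}-\Psi_p(H_{N/(ps)})\bigr].
\]
Divisibility $p^a\mid M_N$ then follows from the congruences at the indices $N/s$, all of which satisfy $v_p(N/s)=a$.

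Your recursive argument never uses the M\"obius function. It needs only the triangular shape of the transform, and uniqueness already follows from that triangularity. It also yields a slightly sharper, level-by-level statement: if $A_d\in\ZZ[y]$ for every proper divisor $d$ of $N$, then $A_N\in\ZZ[y]$ if and only if the congruences hold at the single index $N$. Your justification that the Step~1 identity remains valid while $A_N$ is only known to lie in $\tfrac1N\ZZ[y]$ is exactly the point that needs checking, and it is right.

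The paper's approach buys an explicit expression of $\mathfrak M_N(H)$ as a $\Psi$-twisted M\"obius combination of Dwork differences. That is the form used directly in condition (ii) of Theorem~\ref{thm:BPSimage} and in the two-term prime power formula.
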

\begin{proof}
If $N=p^am$ with $p\nmid m$, subtracting $\Psi_p(H_{N/p})$ from $H_N$ cancels the terms with $v_p(d)<a$, leaving
\[
p^a\sum_{e\mid m}e\,\Psi_{m/e}(A_{p^ae}).
\]
Conversely, let $M_N=\mathfrak M_N(H)$. Split the Mobius divisors as $r=s$ and $r=ps$, $s\mid m$. Then
\[
M_N=\sum_{s\mid m}\mu(s)\Psi_s\left[H_{p^am/s}-\Psi_p(H_{p^{a-1}m/s})\right].
\]
Hence $p^a\mid M_N$ for every $p^a\Vert N$, so $N\mid M_N$. Set $A_N=M_N/N$.
\end{proof}

The BPS image imposes the additional condition $A_N=\mathcal F_NB_N$.

\begin{theorem}[Integral BPS image criterion]
\label{thm:BPSimage}
Let $H_N\in\ZZ[y]$. The following are equivalent:
\begin{enumerate}[label=(\roman*)]
\item there exists a unique $B_N\in\ZZ[y]$ with
\[
H_N=\sum_{d\mid N}d\,\Psi_{N/d}(\mathcal F_dB_d);
\]
\item for every $N$,
\begin{equation}
\boxed{
\mathfrak M_N(H)\in N\mathcal F_N(y)\ZZ[y];
}
\label{eq:BPSimage}
\end{equation}
\item the Dwork congruences \eqref{eq:Dwork} hold and $\mathcal F_N\mid A_N$, where $A_N=\mathfrak M_N(H)/N$.
\end{enumerate}
\end{theorem}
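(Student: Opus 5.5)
The plan is to reduce everything to Theorem~\ref{thm:operatorMobius} and Theorem~\ref{thm:Dwork}, using the fact that the forward transform $B\mapsto H$ and the operator $\mathfrak M$ are mutually inverse on sequences over $\QQ[y]$. First I record this inverse relation. Given any sequence $H$, set $A_N=\mathfrak M_N(H)/N\in\QQ[y]$. The computation in the proof of Theorem~\ref{thm:operatorMobius}, run in reverse via $\Psi_a\Psi_b=\Psi_{ab}$ and $\sum_{r\mid n}\mu(r)=[n=1]$, shows that $H_N=\sum_{d\mid N}d\,\Psi_{N/d}(A_d)$. Conversely, any sequence $A$ satisfying \eqref{eq:undressedSec4} must equal $\mathfrak M_N(H)/N$. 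Hence over $\QQ[y]$ the dressed coordinates $A_N$ exist and are unique. Since $\mathcal F_N\neq0$ and $\QQ[y]$ is a domain, $B_N=A_N/\mathcal F_N$ is also determined uniquely as a rational function whenever a solution exists. Uniqueness in (i) is therefore automatic, and the content of (i) is that $B_N=\mathfrak M_N(H)/(N\mathcal F_N)$ lies in $\ZZ[y]$.

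For (i)$\Rightarrow$(ii), suppose $B_N\in\ZZ[y]$. Theorem~\ref{thm:operatorMobius} gives $\mathfrak M_N(H)=N\mathcal F_NB_N$, and $N\mathcal F_NB_N\in N\mathcal F_N\ZZ[y]$ because $\mathcal F_N\in\ZZ[y]$ by Lemma~\ref{lem:Pcalc}(iv).

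For (ii)$\Rightarrow$(i), write $\mathfrak M_N(H)=N\mathcal F_NB_N$ with $B_N\in\ZZ[y]$. Then $A_N:=\mathcal F_NB_N=\mathfrak M_N(H)/N$. By the inverse relation above, $H_N=\sum_{d\mid N}d\,\Psi_{N/d}(\mathcal F_dB_d)$, so $B$ is the required integral BPS sequence.

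For (ii)$\Leftrightarrow$(iii), assume (ii). Then $A_N=\mathcal F_NB_N\in\ZZ[y]$, so (i) of Theorem~\ref{thm:Dwork} holds. That theorem yields the Dwork congruences \eqref{eq:Dwork}, and $\mathcal F_N\mid A_N$ holds by construction. Conversely, assume (iii). The Dwork congruences give $A_N=\mathfrak M_N(H)/N\in\ZZ[y]$ by Theorem~\ref{thm:Dwork}, and the divisibility $A_N=\mathcal F_NB_N$ with $B_N\in\ZZ[y]$ gives $\mathfrak M_N(H)=N\mathcal F_NB_N$, which is (ii).

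The one point needing care, and the main obstacle, is the meaning of ``$\mathcal F_N\mid A_N$'' in (iii): it must be read as divisibility in $\ZZ[y]$, not merely in $\QQ[y]$. To justify that the two readings agree, I would use that $\mathcal F_N$ is monic up to sign. From the trigonometric form \eqref{eq:FejerTrig}, or from $P_N(y)=2-2T_N(1-y/2)$, the leading coefficient of $P_N$ is $\pm1$, as $P_1,\dots,P_4$ illustrate. Polynomial division by $\mathcal F_N$ therefore stays in $\ZZ[y]$. By Gauss's lemma, if $A_N\in\ZZ[y]$ and $A_N/\mathcal F_N\in\QQ[y]$, then $A_N/\mathcal F_N\in\ZZ[y]$. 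This identifies the integrality in (i) with the conjunction of integral ghost coordinates and Fej\'er divisibility, and completes the plan.
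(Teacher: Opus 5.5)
Your proposal is correct and follows the paper's route. The paper's proof is the one-line combination of Theorem~\ref{thm:Dwork} with Theorem~\ref{thm:operatorMobius}; you unpack it by checking explicitly that $\mathfrak M$ inverts the forward transform over $\QQ[y]$, which makes uniqueness automatic, and by observing that $\mathcal F_N$ has leading coefficient $\pm1$, so divisibility by $\mathcal F_N$ means the same thing in $\QQ[y]$ and $\ZZ[y]$ — both correct clarifications the paper leaves implicit.
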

\begin{proof}
This is Theorem~\ref{thm:Dwork} combined with Theorem~\ref{thm:operatorMobius}.
\end{proof}

Thus
\[
\boxed{
\text{integral BPS forcing}
=
\text{Witt-Dwork integrality}
+
\text{Fej\'er divisibility}.
}
\]
The second condition is genuinely additional. Indeed, taking $A_d=1$ gives $H_N=\sigma_1(N)$, which satisfies all Dwork congruences, but for $N>1$ the polynomial $A_N=1$ is not divisible by $\mathcal F_N$.

For a prime power $N=p^a$, the Mobius sum has only two terms:
\begin{equation}
\boxed{
B_{p^a}(y)
=
\frac{H_{p^a}(y)-H_{p^{a-1}}(P_p(y))}
{p^a\mathcal F_{p^a}(y)}.
}
\label{eq:primePower}
\end{equation}
Thus integral BPS forcing satisfies
\[
H_{p^a}(y)-H_{p^{a-1}}(P_p(y))
\in p^a\mathcal F_{p^a}(y)\ZZ[y].
\]

The Fej\'er factor also has a torsion angle interpretation. Set
\[
y_{j,N}=4\sin^2\frac{\pi j}{N}.
\]

\begin{proposition}[Torsion angle factorization]
\label{prop:torsion}
If $N$ is odd,
\[
\mathcal F_N(y)
=
\prod_{j=1}^{(N-1)/2}(y-y_{j,N})^2.
\]
If $N$ is even,
\[
\mathcal F_N(y)
=
(4-y)\prod_{j=1}^{N/2-1}(y-y_{j,N})^2.
\]
\end{proposition}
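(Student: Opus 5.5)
The plan is to identify the roots of $\mathcal F_N$ from the trigonometric description \eqref{eq:FejerTrig}, count multiplicities with a degree bound, and fix the constant by comparing leading coefficients.

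\textbf{Degree and leading coefficient.} From $P_N(y)=2-2T_N(1-y/2)$ and the leading coefficient $2^{N-1}$ of $T_N$, the leading term of $P_N$ is
\[
-2\cdot 2^{N-1}(-1/2)^N y^N=(-1)^{N+1}y^N .
\]
This matches $P_2=4y-y^2$, $P_3=9y-6y^2+y^3$ and $P_4=16y-20y^2+8y^3-y^4$. Hence $\mathcal F_N$ has degree $N-1$ and leading coefficient $(-1)^{N+1}$.

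\textbf{Roots.} Put $y=4\sin^2 t$ with $t\in[0,\pi/2]$; this map onto $y\in[0,4]$ is bijective. By \eqref{eq:FejerTrig}, $\mathcal F_N(y)=(\sin Nt/\sin t)^2$, which vanishes exactly at $t=\pi j/N$ with $1\le j\le N/2$, i.e. at $y=y_{j,N}$. These values are distinct.

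\textbf{Multiplicities.} For $0<t_0<\pi/2$, the map $t\mapsto 4\sin^2t$ is a local analytic diffeomorphism. Since $\sin Nt$ has a simple zero at $t_0=\pi j/N$, the function $(\sin Nt/\sin t)^2$ has a zero of order exactly $2$ in $t$, hence also in $y$. Thus every interior root $y_{j,N}$ with $j<N/2$ is a root of multiplicity at least $2$.

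\textbf{Odd $N$.} There are $(N-1)/2$ interior roots, each of multiplicity at least $2$, giving total multiplicity at least $N-1=\deg\mathcal F_N$. So $\mathcal F_N=c\prod_{j=1}^{(N-1)/2}(y-y_{j,N})^2$. The product is monic and $(-1)^{N+1}=1$, so $c=1$.

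\textbf{Even $N$.} The interior roots $j=1,\dots,N/2-1$ contribute multiplicity at least $N-2$. The endpoint $j=N/2$ gives the root $y=4$, where $t=\pi/2$ is a critical point of $t\mapsto 4\sin^2 t$, so the multiplicity count above does not apply.

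This endpoint is the one delicate step. I would resolve it by degree counting: the total degree is $N-1$ and $y=4$ is a genuine root, so it must be simple. Therefore
\[
\mathcal F_N=c\,(y-4)\prod_{j=1}^{N/2-1}(y-y_{j,N})^2 .
\]
Comparing leading coefficients gives $c=(-1)^{N+1}=-1$, so $c(y-4)=4-y$, as claimed.

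\textbf{Alternative route.} One could instead use the substitution $y=2-z-z^{-1}$ from Lemma~\ref{lem:frob}. Then $P_N=-(z^N-1)^2/z^N$ and $y=-(z-1)^2/z$, so
\[
\mathcal F_N=z^{1-N}\bigl((z^N-1)/(z-1)\bigr)^2 .
\]
Grouping the roots of unity $\zeta^{\pm j}$ in pairs produces the factors $(y-y_{j,N})$. For even $N$, the root $z=-1$ appears squared in $z$ but gives only a simple factor $y-4$, since $y-4=-(z+1)^2/z$. I would present the first argument and mention this second one as a check.
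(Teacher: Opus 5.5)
Your proof is correct and follows the same route as the paper: you read off the zeros $t=\pi j/N$ from the trigonometric form of $\mathcal F_N$, obtaining double roots at the interior torsion angles and, for even $N$, a simple root at $y=4$. Your degree count and leading-coefficient comparison (degree $N-1$, leading coefficient $(-1)^{N+1}$) also supply the normalization constant and rule out further roots, which the paper's one-line sketch leaves implicit.
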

\begin{proof}
Use \eqref{eq:FejerTrig}. The nontrivial zeros occur at $t=\pi j/N$. Paired values $j$ and $N-j$ give the same $y$ and yield double roots. For even $N$, the unpaired value $j=N/2$ gives the simple root $y=4$.
\end{proof}

Hence the BPS image criterion may also be read as arithmetic Frobenius congruences plus prescribed torsion angle vanishing.

\section{Local curves, modular forcing and square root BPS geometry}
\label{sec:localcurves}

We now apply the general formalism to local curves. Hodge integral calculations already encode universal Calabi-Yau multiple cover contributions \cite{FaberPandharipande2000}, and the local Gromov-Witten theory of curves was subsequently solved by Bryan-Pandharipande \cite{BryanPandharipande2008}; their formulas are used here as input.

\subsection{The super rigid elliptic curve}

For a super-rigid elliptic curve, the local BPS data are
\[
n_{r,d}=\begin{cases}1,&r=1,\\0,&r\ne1,\end{cases}
\qquad d\ge1,
\]
so $B_d(y)=y$. The potential is
\[
F_E(Q)=\sum_{d\ge1}\Li_1(Q^d)
=-\log\prod_{d\ge1}(1-Q^d).
\]
With $Q=e^{2\pi i\tau}$,
\[
F_E(\tau)=-\log\eta(\tau)+\frac{\pi i\tau}{12}.
\]
Therefore
\begin{equation}
\boxed{
R_E(\tau,\lambda)
=-\log\eta(\tau+h)+2\log\eta(\tau)-\log\eta(\tau-h),
\qquad h=\frac{\lambda}{2\pi}.
}
\label{eq:eta}
\end{equation}
Equivalently,
\[
\exp(R_E)=\frac{\eta(\tau)^2}{\eta(\tau+h)\eta(\tau-h)}.
\]
Coefficientwise,
\[
H_{E,N}(y)=\sigma_1(N)P_N(y),
\]
and Theorem~\ref{thm:operatorMobius} recovers $B_N=y$ using $\mu*\sigma_1=\operatorname{id}$.

\subsection{The exact degree two local curve}

Assume now $g\ge2$ and consider the balanced local Calabi-Yau specialization. Bryan-Pandharipande give the degree two disconnected partition coefficient \cite[Section~8]{BryanPandharipande2008}:
\begin{equation}
\boxed{
Z_2^{(g)}(y)
=
y^{2g-2}
\left[(4-2\sqrt y)^{g-1}+(4+2\sqrt y)^{g-1}\right].
}
\label{eq:Z2}
\end{equation}
The degree one coefficient is $Z_1^{(g)}=y^{g-1}$, so the connected degree two free energy is
\begin{equation}
F_2^{(g)}(y)
=
y^{2g-2}
\left[(4-2\sqrt y)^{g-1}+(4+2\sqrt y)^{g-1}-\frac12\right].
\label{eq:F2}
\end{equation}
Since $B_1(y)=y^g$ and $P_2(y)=y(4-y)$, the degree two GV relation gives
\[
F_2^{(g)}(y)
=
\frac{B_2^{(g)}(y)}y
+\frac12\frac{B_1(P_2(y))}{P_2(y)}.
\]
Hence
\begin{theorem}[Closed degree two BPS polynomial]
\label{thm:B2closed}
\begin{equation}
\boxed{
B_2^{(g)}(y)
=
y^g\left\{
y^{g-1}\left[(4-2\sqrt y)^{g-1}+(4+2\sqrt y)^{g-1}-\frac12\right]
-\frac12(4-y)^{g-1}
\right\}.
}
\label{eq:B2closed}
\end{equation}
The right hand side belongs to $\ZZ[y]$.
\end{theorem}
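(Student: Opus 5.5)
The plan is to treat the statement as two separate claims: the closed formula, and its integrality.

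\textbf{The closed formula.} The closed formula is a direct solve of the degree two GV relation displayed just before the theorem. I will first confirm that relation from Lemma~\ref{lem:potentialB}. The $Q^2$ coefficient of the potential receives $(d,k)=(2,1)$, giving $B_2(y)/y$, and $(d,k)=(1,2)$, giving $\tfrac12 B_1(P_2)/P_2$. Inserting $B_1=y^g$ and $P_2=y(4-y)$ gives
\[
\frac{B_1(P_2)}{P_2}=P_2^{g-1}=y^{g-1}(4-y)^{g-1}.
\]
Multiplying the relation by $y$ and substituting \eqref{eq:F2} then yields
\[
B_2^{(g)}=y^{2g-1}\Bigl[(4-2\sqrt y)^{g-1}+(4+2\sqrt y)^{g-1}-\tfrac12\Bigr]-\tfrac12 y^{g}(4-y)^{g-1}.
\]
Factoring out $y^g$ gives \eqref{eq:B2closed}.

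\textbf{Polynomiality.} The bracket $(4-2\sqrt y)^{g-1}+(4+2\sqrt y)^{g-1}$ is symmetric under $\sqrt y\mapsto-\sqrt y$. By the binomial theorem it equals
\[
2\sum_{e}\binom{g-1}{2e}4^{g-1-2e}(4y)^{e},
\]
so it lies in $2\ZZ[y]$. It is therefore not merely polynomial but even. Dividing by $2$ gives
\[
S(y)=\sum_e\binom{g-1}{2e}2^{2g-2-2e}y^e\in\ZZ[y],
\]
and hence $B_2^{(g)}=y^g\{2y^{g-1}S-\tfrac12(y^{g-1}+(4-y)^{g-1})\}$.

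\textbf{Integrality of the halves (main obstacle).} The only obstruction to $B_2^{(g)}\in\ZZ[y]$ is the pair of halves. They combine into
\[
-\tfrac12\bigl(y^{g-1}+(4-y)^{g-1}\bigr),
\]
so I need $y^{g-1}+(4-y)^{g-1}\in 2\ZZ[y]$. Modulo $2$, $(4-y)^{g-1}\equiv(-y)^{g-1}\equiv y^{g-1}$, so the sum is $\equiv 2y^{g-1}\equiv 0 \pmod 2$. This completes the integrality claim.

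As a consistency check, I would also expand the result and compare it with the upper coefficients quoted in the introduction. The top part of $y^{2g-1}S$ gives $n_{2g-1+e,2}=2^{2g-1-2e}\binom{g-1}{2e}$, which matches. This comparison, rather than the algebra, is where sign or indexing slips would surface.
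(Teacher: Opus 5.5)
Your proposal is correct and follows the paper's route: read off the $Q^2$ coefficient of the GV potential from Lemma~\ref{lem:potentialB}, multiply by $y$, and substitute the connected free energy \eqref{eq:F2}. Your parity argument showing $y^{g-1}+(4-y)^{g-1}\in 2\ZZ[y]$ is a genuine improvement: the paper's one-line proof only observes that the symmetric pair in $\sqrt y$ involves integral powers of $y$, and never explains why the two $\tfrac12$ terms combine into an integral polynomial.
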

\begin{proof}
Multiply the degree two GV relation by $y$ and substitute \eqref{eq:F2}. The symmetric pair in $\sqrt y$ contains only integral powers of $y$.
\end{proof}

Expanding the symmetric pair yields the upper ladder.

\begin{corollary}[Upper degree two BPS ladder]
\label{cor:upperB2}
For every $e\ge1$,
\begin{equation}
\boxed{
n_{2g-1+e,2}
=
2^{2g-1-2e}\binom{g-1}{2e}.
}
\label{eq:upperB2}
\end{equation}
In particular the invariant vanishes for $2e>g-1$.
\end{corollary}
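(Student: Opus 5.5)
The plan is to read off the coefficient of $y^{2g-1+e}$ directly from the closed form \eqref{eq:B2closed} of Theorem~\ref{thm:B2closed}. By the definition \eqref{eq:Bintro}, $n_{2g-1+e,2}$ is exactly that coefficient. Distributing the outer factor $y^g$, I will write
\[
B_2^{(g)}(y)=y^{2g-1}\left[(4-2\sqrt y)^{g-1}+(4+2\sqrt y)^{g-1}\right]-\frac12 y^{2g-1}-\frac12 y^g(4-y)^{g-1}.
\]
I will then treat the three summands separately and check which of them can reach degree $2g-1+e$ with $e\ge1$.

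First, the last summand $y^g(4-y)^{g-1}$ is a polynomial of degree $g+(g-1)=2g-1$. It therefore contributes nothing to $y^{2g-1+e}$ once $e\ge1$. The middle summand $-\tfrac12 y^{2g-1}$ sits entirely in degree $2g-1$, which is the case $e=0$, so it is also irrelevant. This is why the statement is restricted to $e\ge1$: in degree $2g-1$ all three pieces interfere, whereas above it only the symmetric pair survives.

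Second, I will expand the symmetric pair by the binomial theorem. In $(4+2\sqrt y)^{g-1}+(4-2\sqrt y)^{g-1}$ the odd powers of $\sqrt y$ cancel and the even ones double, giving
\[
2\sum_{e\ge0}\binom{g-1}{2e}4^{g-1-2e}(2\sqrt y)^{2e}
=\sum_{e\ge0}\binom{g-1}{2e}2^{1+2(g-1-2e)+2e}y^e
=\sum_{e\ge0}\binom{g-1}{2e}2^{2g-1-2e}y^e .
\]
Multiplying by $y^{2g-1}$ shifts this to degree $2g-1+e$, which yields \eqref{eq:upperB2}. The vanishing for $2e>g-1$ is then just the vanishing of the binomial coefficient.

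There is no real obstacle here. The only point that needs care is the degree bound $\deg\bigl(y^g(4-y)^{g-1}\bigr)=2g-1$, which isolates the upper ladder from the lower-genus contamination coming from the degree one double cover term. This uses the standing assumption $g\ge2$ only to ensure that the exponents are nonnegative.
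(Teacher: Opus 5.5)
Your proposal is correct and takes the same approach as the paper, whose proof is simply ``expanding the symmetric pair'' in the closed form of Theorem~\ref{thm:B2closed}. You also make explicit the step the paper leaves implicit: the degree bound $\deg\bigl(y^g(4-y)^{g-1}\bigr)=2g-1$, which together with the $-\tfrac12 y^{2g-1}$ term confines all interference to the critical layer $e=0$.
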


The critical unbranched layer is exceptional:
\begin{equation}
\boxed{
n_{2g-1,2}
=
2^{2g-1}-\frac{1+(-1)^{g-1}}2.
}
\label{eq:criticalB2}
\end{equation}
This equals the weighted count of connected unramified double covers, $(2^{2g}-1)/2$, minus the universal degree one double multiple cover contribution $(-1)^{g-1}/2$.

\subsection{Square root stacks and the anti-invariant local Calabi-Yau obstruction}

Moduli of roots, twisted covers and root constructions have a substantial history \cite{AbramovichCortiVistoli2003,Cadman2007}. On the Riemann--Roch side, Chiodo's formula describes Chern characters of derived pushforwards of universal $r$-th roots and has later applications to Hurwitz theory and topological recursion \cite{Chiodo2008,LewanskiEtAl2017}; see also \cite{ChiodoHolmes2023} for a universal Picard formulation. Branch morphisms from stable map spaces to symmetric products provide related covering theoretic geometry \cite{FantechiPandharipande2002}. Our construction below is different in base and in enumerative interpretation: the curve $C$ is fixed, the root varies with an effective divisor $D\in C^{(2e)}$, and the derived pushforward is twisted by the local Calabi-Yau normal bundle. We use this fixed curve square root geometry only to realize a specific family of degree two BPS integers.

Recent refined DT/PT work gives further all genus refinements of local curve partition functions \cite{Monavari2025}; we do not use those results here, but they provide a complementary setting in which refinements of the local curve BPS package can be studied.

Let $\mathcal N\to C$ be a rank two bundle satisfying
\begin{equation}
\det\mathcal N\simeq K_C.
\label{eq:localCY}
\end{equation}
For $e\ge1$, let
\[
a_{2e}:C^{(2e)}\longrightarrow \Pic^{2e}(C),
\qquad
D\longmapsto \mathcal O_C(D),
\]
and let
\[
[2]:\Pic^e(C)\longrightarrow \Pic^{2e}(C),
\qquad
M\longmapsto M^{\otimes 2}.
\]
The stack of square root data is
\begin{equation}
\mathscr R_{2e}(C)
=
\left\{(D,M,\phi):D\in C^{(2e)},\ M\in\Pic^e(C),\
\phi:M^{\otimes2}\xrightarrow{\sim}\mathcal O_C(D)\right\}.
\label{eq:rootstack}
\end{equation}
Equivalently, if
\[
\widetilde C^{(2e)}
:=
C^{(2e)}\times_{\Pic^{2e}(C),\,a_{2e}}\Pic^e(C)
\]
is formed using the multiplication-by-two map, then
$\mathscr R_{2e}(C)\to\widetilde C^{(2e)}$ is the $\mu_2$-gerbe of
trivializations of the universal square.  Since $[2]$ is finite etale of degree
$2^{2g}$ and every square root datum has automorphism group $\mu_2$, the
forgetful morphism
\[
\varrho:\mathscr R_{2e}(C)\longrightarrow C^{(2e)}
\]
has stack theoretic degree $2^{2g-1}$.  More precisely, in the rational Chow
group,
\begin{equation}
\boxed{
\varrho_*[\mathscr R_{2e}(C)]
=
2^{2g-1}[C^{(2e)}].
}
\label{eq:rootdegree}
\end{equation}
Consequently, for every top degree rational Chow class $\alpha$ on $C^{(2e)}$,
\begin{equation}
\int_{\mathscr R_{2e}(C)}\varrho^*\alpha
=
2^{2g-1}\int_{C^{(2e)}}\alpha.
\label{eq:rootpush}
\end{equation}
As a stack of root data, $\mathscr R_{2e}(C)$ carries a tautological line bundle
$\mathcal M$ on $C\times\mathscr R_{2e}(C)$ together with a tautological
isomorphism
\[
\mathcal M^{\otimes2}\simeq\mathcal O(\mathcal D),
\]
where $\mathcal D$ is the pullback of the universal divisor on
$C\times C^{(2e)}$.

Let $q:C\times\mathscr R_{2e}(C)\to C$ and
$p:C\times\mathscr R_{2e}(C)\to\mathscr R_{2e}(C)$ be the projections. Define
\begin{equation}
\boxed{
\mathbb V_e=-Rp_*(q^*\mathcal N\otimes\mathcal M^{-1}).
}
\label{eq:Ve}
\end{equation}
Over the reduced branch locus the root datum determines a double cover $f:\widetilde C\to C$ and
\[
f_*\mathcal O_{\widetilde C}\simeq\mathcal O_C\oplus M^{-1}.
\]
Thus $\mathbb V_e$ is the family version of the cover-dependent anti-invariant part of $R\Gamma(\widetilde C,f^*\mathcal N)$.

Let $x$ be the class of the divisor $p_0+C^{(2e-1)}$ and let $\theta$ be the pullback of the theta class under Abel-Jacobi. The universal divisor has Kunneth decomposition
\[
[\mathcal D]=2e\,\eta+\gamma+x,
\qquad
\gamma^2=-2\eta\theta.
\]

\begin{proposition}[Local Calabi-Yau GRR cancellation]
\label{prop:GRR}
The class $\mathbb V_e$ has virtual rank $2e$ and
\begin{equation}
\boxed{
\ch(\mathbb V_e)=e^{-x/2}\left(2e+\frac{\theta}{2}\right).
}
\label{eq:chVe}
\end{equation}
Consequently
\begin{equation}
\boxed{
c_t(\mathbb V_e)
=
\left(1-\frac{xt}{2}\right)^{2e}
\exp\left(\frac{\theta t/2}{1-xt/2}\right).
}
\label{eq:ctVe}
\end{equation}
\end{proposition}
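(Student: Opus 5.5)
The plan is a direct Grothendieck--Riemann--Roch computation for $p:C\times\mathscr R_{2e}(C)\to\mathscr R_{2e}(C)$. Since $\mathscr R_{2e}(C)$ is a $\mu_2$-gerbe over a finite étale cover of $C^{(2e)}$, we work in rational Chow groups and first pull everything back from $C^{(2e)}$. The tautological isomorphism $\mathcal M^{\otimes 2}\simeq\mathcal O(\mathcal D)$ gives $c_1(\mathcal M)=\tfrac12[\mathcal D]$ rationally, so
\[
\ch(\mathcal M^{-1})=\exp\!\left(-\tfrac12(2e\,\eta+\gamma+x)\right).
\]
Here $\eta$ is the point class pulled back from $C$, so $\eta^2=0$, $\eta\gamma=0$ and $\gamma^2=-2\eta\theta$. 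GRR then gives
\[
\ch(\mathbb V_e)=-p_*\bigl(\ch(q^*\mathcal N)\,\ch(\mathcal M^{-1})\,\td(C)\bigr).
\]

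The first step uses the local Calabi--Yau condition $\det\mathcal N\simeq K_C$. This gives $\ch(\mathcal N)=2+c_1(K_C)=2+(2g-2)\eta$, while $\td(C)=1-(g-1)\eta$. Hence
\[
\ch(\mathcal N)\td(C)=2+(2g-2)\eta-(2g-2)\eta=2,
\]
which is the cancellation announced in the introduction. The computation therefore reduces to
\[
\ch(\mathbb V_e)=-2\,p_*\exp\!\left(-\tfrac12(2e\eta+\gamma+x)\right).
\]

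Next, the factor $e^{-x/2}$ is pulled back from the base and can be pulled out of $p_*$. For the remaining exponential, the nilpotence relations give
\[
\exp\!\left(-e\eta-\tfrac{\gamma}{2}\right)=1-e\eta-\tfrac{\gamma}{2}+\tfrac18\gamma^2=1-e\eta-\tfrac{\gamma}{2}-\tfrac14\eta\theta,
\]
since $\eta\gamma=\eta^2=0$ and all higher terms vanish. Under $p_*$, only the terms containing $\eta$ survive: $p_*\eta=1$, and $\gamma$ has odd Künneth type, so $p_*\gamma=0$. The result is $\ch(\mathbb V_e)=e^{-x/2}(2e+\tfrac{\theta}{2})$, with the overall sign producing the positive rank $2e$. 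The same sign check confirms that the virtual rank is $2e$, as also follows from Riemann--Roch: $\chi(\mathcal N\otimes M^{-1})=\deg+2(1-g)=(2g-2-2e)+2-2g=-2e$.

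The main obstacle is pinning down the conventions. We must fix the normalization $\gamma^2=-2\eta\theta$ and confirm that $\theta$ really denotes the Abel--Jacobi pullback appearing in $p_*(\gamma^2)$. We must also justify that $c_1(\mathcal M)=\tfrac12[\mathcal D]$ and GRR hold on the gerbe in rational Chow, either by working on the étale cover or by Vistoli-type intersection theory on Deligne--Mumford stacks.

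For \eqref{eq:ctVe}, we compare with the standard Poincaré-type computation. A class with Chern character $r+\theta/2$ and no other terms has $c_t=\exp(\theta t/2)$, since $\ch_k=0$ for $k\ge2$ and $\ch_1=\theta/2$; this suggests the pattern, but the factor $e^{-x/2}$ must be handled properly. The cleanest route is to write $\mathbb V_e$ formally as $L\otimes W$ with $c_1(L)=-x/2$ and $\ch(W)=2e+\theta/2$. Then $W$ has Chern roots with power sums $p_1=\theta/2$ and $p_k=0$ for $k\ge2$, so it behaves like a rank-$2e$ class with $c_t(W)=e^{\theta t/2}$. For a twist of such a class by a line bundle of class $\ell=-x/2$, the formula $c_t(W\otimes L)=\sum_k c_k(W)\,t^k(1+\ell t)^{2e-k}$ holds whenever the only nonzero power sum is $p_1$. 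This can be checked from the Chern roots: $c_k(W)=\theta^k/(2^k k!)$, and the multiplicities contribute $\binom{2e-i}{k-i}$-type factors. Summing gives $(1+\ell t)^{2e}\exp\!\bigl(\tfrac{\theta t/2}{1+\ell t}\bigr)$, which is \eqref{eq:ctVe}.
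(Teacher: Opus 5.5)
Your proposal is correct and follows the paper's proof: the Calabi--Yau cancellation $\ch(\mathcal N)\td(C)=2$ and $c_1(\mathcal M)=[\mathcal D]/2$ reduce GRR to $p_*\bigl[2e^{-[\mathcal D]/2}\bigr]=e^{-x/2}(-2e-\theta/2)$ via $\gamma^2=-2\eta\theta$, and your twisting argument just spells out the ``standard relation'' between $\ch$ and $c_t$ that the paper cites. The identity $c_t(W\otimes L)=(1+\ell t)^{r}c_{t/(1+\ell t)}(W)$ holds for any class of rank $r$, so you do not need to restrict to $p_k=0$ for $k\ge2$.
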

\begin{proof}
Because $\det\mathcal N\simeq K_C$,
\[
\ch(\mathcal N)=2+(2g-2)\eta,
\qquad
\td(C)=1+(1-g)\eta,
\]
and hence
\begin{equation}
\ch(\mathcal N)\td(C)=2.
\label{eq:CYcancel}
\end{equation}
Since $c_1(\mathcal M)=[\mathcal D]/2$, Grothendieck--Riemann--Roch in the form used below (cf. \cite{Fulton1998}) gives
\[
\ch Rp_*(q^*\mathcal N\otimes\mathcal M^{-1})
=p_*\left[2e^{-[\mathcal D]/2}\right]
=e^{-x/2}\left(-2e-\frac{\theta}{2}\right).
\]
Applying the minus sign in \eqref{eq:Ve} gives \eqref{eq:chVe}. The total Chern class follows from the standard relation between Chern character and total Chern class.
\end{proof}

The top Chern class is
\begin{equation}
c_{2e}(\mathbb V_e)
=
\frac1{2^{2e}}\sum_{i=0}^{2e}(-1)^{2e-i}\frac{x^{2e-i}\theta^i}{i!}.
\label{eq:topVe}
\end{equation}
Macdonald's intersection calculus for symmetric products gives \cite{Macdonald1962}
\begin{equation}
\int_{C^{(n)}}x^{n-i}\theta^i=\frac{g!}{(g-i)!}.
\label{eq:MacInt}
\end{equation}
Therefore
\[
\int_{C^{(2e)}}c_{2e}(\mathbb V_e)
=
4^{-e}\binom{g-1}{2e}.
\]
The Chern character in Proposition~\ref{prop:GRR}, and hence the top Chern class above, is expressed entirely in the pullbacks of $x$ and $\theta$ from $C^{(2e)}$. Therefore the pushforward identity \eqref{eq:rootpush} applies directly and yields the square root Chern number below.

\begin{proposition}[Square root Chern number]
\label{prop:rootChern}
For every $e\ge1$,
\begin{equation}
\boxed{
\int_{\mathscr R_{2e}(C)}c_{2e}(\mathbb V_e)
=
2^{2g-1-2e}\binom{g-1}{2e}.
}
\label{eq:rootChern}
\end{equation}
This formula depends only on the local Calabi-Yau condition $\det\mathcal N\simeq K_C$.
\end{proposition}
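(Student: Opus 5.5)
The plan is to reduce the integral over $\mathscr R_{2e}(C)$ to one over $C^{(2e)}$, and then evaluate by Macdonald's formulas. The first step is to record that every ingredient in $c_{2e}(\mathbb V_e)$ is pulled back from $C^{(2e)}$. Proposition~\ref{prop:GRR} expresses $\ch(\mathbb V_e)$ as a polynomial in $x$ and $\theta$. Here these denote $\varrho^*x$ and $\varrho^*\theta$: the universal divisor $\mathcal D$ is the pullback of the universal divisor on $C\times C^{(2e)}$, and $c_1(\mathcal M)=[\mathcal D]/2$ holds rationally. Hence $c_{2e}(\mathbb V_e)=\varrho^*\alpha$, where $\alpha$ is the degree $2e$ part of the class \eqref{eq:ctVe} computed on $C^{(2e)}$. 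The pushforward identity \eqref{eq:rootpush}, which comes from the degree computation \eqref{eq:rootdegree}, then gives
\[
\int_{\mathscr R_{2e}(C)}c_{2e}(\mathbb V_e)=2^{2g-1}\int_{C^{(2e)}}\alpha .
\]

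The second step is to extract $\alpha$ from \eqref{eq:ctVe} and confirm \eqref{eq:topVe}. Write $u=xt/2$ and $v=\theta t/2$, so that $c_t=(1-u)^{2e}\exp\bigl(v/(1-u)\bigr)=\sum_i \frac{v^i}{i!}(1-u)^{2e-i}$. The coefficient of $t^{2e}$ is
\[
\sum_{i}\frac{(\theta/2)^i}{i!}\binom{2e-i}{2e-i}\Bigl(-\frac x2\Bigr)^{2e-i},
\]
since only the top power of $(1-u)^{2e-i}$ has the right degree. This is exactly \eqref{eq:topVe}: the powers of $2$ combine to $2^{-2e}$ and the sign is $(-1)^{2e-i}=(-1)^i$.

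The third step is to integrate using \eqref{eq:MacInt} with $n=2e$:
\[
\int_{C^{(2e)}}\alpha=4^{-e}\sum_{i=0}^{2e}(-1)^i\binom{g}{i}.
\]
The alternating partial sum identity $\sum_{i=0}^{k}(-1)^i\binom{g}{i}=(-1)^k\binom{g-1}{k}$ with $k=2e$ gives $4^{-e}\binom{g-1}{2e}$. Multiplying by $2^{2g-1}$ yields $2^{2g-1-2e}\binom{g-1}{2e}$, which is the claimed value. Only $\det\mathcal N\simeq K_C$ was used, through \eqref{eq:CYcancel} in Proposition~\ref{prop:GRR}, so the final remark of the statement follows.

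The main obstacle is the first step. We must justify that Chern classes of $\mathbb V_e$ on the $\mu_2$-gerbe are honestly pulled back, even though $\mathcal M$ itself does not descend. We must also check that the stack-theoretic degree $2^{2g-1}$ correctly accounts for the étale degree $2^{2g}$ of $[2]$ and the $\mu_2$ automorphisms. I would handle this by working in rational Chow groups: GRR on the gerbe is compatible with rational pullback, and \eqref{eq:rootdegree} is taken as given. With these in place, the rest is the binomial computation above.
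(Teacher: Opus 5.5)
Your proposal is correct and follows the paper's route: you write $c_{2e}(\mathbb V_e)$ as a pullback from $C^{(2e)}$ via Proposition~\ref{prop:GRR}, extract \eqref{eq:topVe}, integrate with Macdonald's formula \eqref{eq:MacInt}, and multiply by the stacky degree $2^{2g-1}$ from \eqref{eq:rootpush}. Your derivation of \eqref{eq:topVe} from \eqref{eq:ctVe} and your use of the identity $\sum_{i=0}^{2e}(-1)^i\binom{g}{i}=\binom{g-1}{2e}$ fill in steps the paper leaves implicit.
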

\begin{proof}
Combine Proposition~\ref{prop:GRR}, \eqref{eq:MacInt}, and the rational Chow pushforward formula \eqref{eq:rootpush}.
\end{proof}

\begin{corollary}[Balanced degree two BPS realization]
\label{cor:rootrealization}
In the balanced local Calabi-Yau specialization used in Theorem~\ref{thm:B2closed}, for
\[
1\le e\le\left\lfloor\frac{g-1}{2}\right\rfloor,
\]
one has
\begin{equation}
\boxed{
 n_{2g-1+e,2}
=
\int_{\mathscr R_{2e}(C)}c_{2e}(\mathbb V_e).
}
\label{eq:rootrealization}
\end{equation}
\end{corollary}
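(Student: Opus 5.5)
The plan is to compare two closed formulas that are already in hand and check that they agree on the stated range of $e$. No new geometry is needed.

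\textbf{Step 1 (BPS side).} By Corollary~\ref{cor:upperB2}, in the balanced local Calabi-Yau specialization,
\[
n_{2g-1+e,2}=2^{2g-1-2e}\binom{g-1}{2e}\qquad\text{for every } e\ge1 .
\]
I would recheck this briefly from Theorem~\ref{thm:B2closed}. Expand the symmetric pair as
\[
(4-2\sqrt y)^{g-1}+(4+2\sqrt y)^{g-1}=2\sum_{e}\binom{g-1}{2e}4^{g-1-2e}(4y)^{e}=2\sum_e\binom{g-1}{2e}2^{2g-2-2e}y^e .
\]
Multiplied by $y^{2g-1}$, the term of index $e$ contributes exactly $2^{2g-1-2e}\binom{g-1}{2e}$ to the coefficient of $y^{2g-1+e}$. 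The remaining terms are $-\tfrac12 y^{2g-1}$ and $-\tfrac12 y^g(4-y)^{g-1}$. The first sits in degree $2g-1$ only. The second has $y$-degree at most $2g-1$. So neither affects the layers with $e\ge1$.

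\textbf{Step 2 (geometric side).} By Proposition~\ref{prop:rootChern},
\[
\int_{\mathscr R_{2e}(C)}c_{2e}(\mathbb V_e)=2^{2g-1-2e}\binom{g-1}{2e}.
\]
This uses only $\det\mathcal N\simeq K_C$, which holds in the balanced specialization because that is a local Calabi-Yau geometry over $C$.

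\textbf{Step 3 (range of $e$).} The bound $e\le\lfloor (g-1)/2\rfloor$ is exactly the range $2e\le g-1$ where the binomial coefficient is nonzero. Both sides vanish beyond it, so the restriction only marks where the identity is nontrivial. Note also that $2e\le g-1$ keeps the Macdonald integrals \eqref{eq:MacInt} in the range $i\le 2e\le g$, where the formula $g!/(g-i)!$ holds. This is the condition needed for the top Chern class computation to be legitimate.

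Equating Steps 1 and 2 gives \eqref{eq:rootrealization}.

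\textbf{Main obstacle.} The delicate point is not the identity itself but the pushforward step behind Proposition~\ref{prop:rootChern}. One must confirm that $c_{2e}(\mathbb V_e)$ really is pulled back from $C^{(2e)}$. This holds because $\ch(\mathbb V_e)$ involves only $x$ and $\theta$ after GRR along $p$, so \eqref{eq:rootpush} with degree $2^{2g-1}$ applies.

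I would also check the sign conventions in the top Chern class sum \eqref{eq:topVe}. The Macdonald evaluation gives
\[
\sum_i(-1)^i\frac{g!}{i!\,(g-i)!}=\sum_i(-1)^i\binom{g}{i},
\]
with $i$ running from $0$ to $2e$. By the partial alternating sum identity this equals $\binom{g-1}{2e}$, since $2e$ is even. Hence the integral over $C^{(2e)}$ is $4^{-e}\binom{g-1}{2e}$, and multiplying by the degree $2^{2g-1}$ gives the stated Chern number.
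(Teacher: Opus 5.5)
Your proposal is correct and is exactly the paper's argument: the paper proves the corollary by comparing the square root Chern number of Proposition~\ref{prop:rootChern} with the upper ladder of Corollary~\ref{cor:upperB2}. Your rechecks of that ladder from Theorem~\ref{thm:B2closed} and of the alternating sum $\sum_{i\le 2e}(-1)^i\binom{g}{i}=\binom{g-1}{2e}$ are consistent with the paper. One small overstatement: the bound $2e\le g-1$ is not needed for the Macdonald computation to be legitimate, since for $i>g$ one has $\theta^i=0$, matching $g!/(g-i)!=0$, so the bound only marks where the identity is nontrivial, as you also say.
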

\begin{proof}
Compare Proposition~\ref{prop:rootChern} with Corollary~\ref{cor:upperB2}.
\end{proof}

We emphasize that Corollary~\ref{cor:rootrealization} is a BPS level Chern number realization in the balanced local curve theory. We do not identify $\mathscr R_{2e}(C)$ with a compactification of a single stable map component whose virtual degree is the raw Gromov-Witten coefficient.

Combining Corollary~\ref{cor:rootrealization} with the forcing reconstruction in the balanced local curve theory gives, for $e\ge1$,
\begin{equation}
\boxed{
[y^{2g-1+e}]
\frac{H_2(y)-H_1(P_2(y))}{2(4-y)}
=
\int_{\mathscr R_{2e}(C)}c_{2e}(\mathbb V_e).
}
\label{eq:bridge}
\end{equation}
Thus, in the balanced local curve theory, the same integer is simultaneously a coefficient reconstructed from the central difference forcing, a BPS invariant, and a square root stack Chern number.

\subsection{A degree three diagnostic}

The square root construction is special to degree two. For a finite flat Gorenstein cover $f:D\to C$ of degree $d$, define the Tschirnhausen bundle $E$ by
\begin{equation}
0\to\mathcal O_C\to f_*\mathcal O_D\to E^\vee\to0.
\label{eq:Tsch}
\end{equation}
Then $\rk E=d-1$. For degree two this is a line bundle, while generic degree three covers have $S_3$ monodromy and are governed by rank two Tschirnhausen data rather than cubic roots; compare Miranda \cite{Miranda1985} and Casnati-Ekedahl \cite{CasnatiEkedahl1996}.

If the source genus is written as
\[
G-1=d(g-1)+e,
\]
then $\deg E=e$, and the cover-dependent local CY obstruction has the natural form
\[
-R\pi_*(E^\vee\oplus K_C\otimes E^\vee),
\]
whose virtual rank is $2e$. By Riemann-Hurwitz, $2e$ is also the branch degree. This suggests a higher degree Hurwitz/Tschirnhausen extension of the square root picture, but we do not develop it here.

As a diagnostic, the weighted connected unramified degree three cover count is
\begin{equation}
C_{3,g}
=
2\cdot6^{2g-2}+3^{2g-2}-2^{2g-1}+\frac13.
\label{eq:C3}
\end{equation}
The critical primitive degree three BPS layer removes the universal degree one $k=3$ multiple cover term and becomes
\[
n_{3g-2,3}
=
2\cdot6^{2g-2}+3^{2g-2}-2^{2g-1}.
\]
At the forcing level the subtraction is automatic:
\[
B_3(y)=\frac{H_3(y)-H_1(P_3(y))}{3\mathcal F_3(y)},
\qquad
\mathcal F_3(y)=(3-y)^2.
\]
This shows that the reconstruction is not peculiar to degree two, while the correct geometry beyond degree two is naturally higher rank.

\section{Concluding perspective}
\label{sec:conclusion}

The results above isolate a common algebraic structure behind three features of the all genus raywise Gopakumar-Vafa expansion. First, the common central difference regularizes the genus zero pole and converts the multiple cover expansion into a polynomial forcing sequence. Second, the cover polynomial composition law makes this forcing exactly invertible: after Fej\'er dressing, ordinary divisor M\"obius inversion survives as an operator-valued ghost inversion. Third, the same transform is compatible with the $y$-adic filtration, so the quantum jet has an intrinsic enumerative meaning as the BPS genus filtration.

This point of view also separates two notions of integrality which are easy to conflate at the level of the original generating function. The Witt-Dwork congruences characterize integral ghost coordinates for the Chebyshev Frobenius system. BPS integrality is strictly stronger because the dressed coordinate must also contain the full Fej\'er factor. Equivalently, besides satisfying the arithmetic Frobenius congruences, it must vanish with prescribed multiplicity at the nontrivial torsion angles of the sine kernel. Thus the central difference remembers both the divisor arithmetic of multiple covers and the analytic zero set of the universal multiple cover denominator.

The local curve calculations show that these structures are not merely formal. For an elliptic curve the forcing is an exact finite difference of $\log\eta$, while in higher genus the degree two reconstruction produces integers which admit a separate Chern number realization on square root stacks. The local Calabi-Yau identity
\[
\operatorname{ch}(\mathcal N)\operatorname{td}(C)=2
\]
is precisely what makes the corresponding Grothendieck--Riemann--Roch calculation collapse to a simple expression on the symmetric product. In this sense the algebraic simplicity of the forcing and the geometric simplicity of the Chern character reflect the same Calabi-Yau cancellation from two different sides.

Taken together, the paper identifies central difference forcing as a lossless all-genus coordinate system along an effective ray: over $\mathbb Q[[y]]$ it is equivalent to the BPS spectrum, its finite jets recover the finite genus truncations, and over $\mathbb Z[y]$ its image is cut out by explicit arithmetic and Fej\'er conditions. The local curve examples provide concrete geometric models for these abstract statements without requiring a reformulation of the underlying Gromov-Witten theory.

\medskip
\noindent\textbf{Three coordinate systems.}
It is useful to separate the three types of data which appear throughout the paper. The original Gromov-Witten expansion is naturally expressed in terms of coefficients of $Q^N$ and powers of the string coupling, but these coordinates mix primitive curve degrees with multiple covers and mix BPS genera with the Taylor expansion of the sine factors. The BPS polynomials
\[
B_N(y)=\sum_{g\ge0}n_{g,N}y^g
\]
remove this mixing and are the enumeratively natural coordinates, but they are not the quantities on which a common difference operator acts diagonally. The normalized forcing coefficients $H_N$ occupy an intermediate position: they are obtained directly from the generating function by one universal operator, yet they retain enough arithmetic structure to reconstruct the BPS polynomials exactly.

Thus one may view the paper as establishing the chain
\[
\boxed{
\text{Gromov-Witten expansion}
\ \xrightarrow{\ \Delta\ }\
\text{difference forcing}
\ \xleftrightarrow{\ \text{M\"obius--Fej\'er}\ }\
\text{BPS genus polynomials}.
}
\]
The left arrow is analytic in origin: it uses the central difference to cancel the universal sine-square pole. The right equivalence is arithmetic and algebraic: divisor M\"obius inversion removes multiple cover mixing, while Fej\'er division removes the remaining kernel imposed by the difference operator. The genus filtration is visible on the right as polynomial degree and on the middle term as the $y$-adic filtration. This explains why the forcing simultaneously interacts with curve counting geometry, Frobenius congruences, and finite-difference equations.

The same distinction clarifies the role of the local curve Chern numbers. They provide a geometric realization of selected BPS coordinates, not an alternative definition of the raw Gromov-Witten coefficients. Equation \eqref{eq:bridge} then says that these Chern numbers can nevertheless be read directly from the forcing after the universal primitive multiple cover subtraction. In this sense the difference forcing serves as a bridge between the analytic presentation of the generating function and the integral geometric quantities which the BPS transform isolates.

\section*{Acknowledgements}
The author is supported by the National Natural Science Foundation of China
(NSFC) under Grant Nos.~11501470, 11426187, and 11791240561, and is partially
supported by NSFC Grant No.~11671328, the Chengdu Science and Technology
Program under Grant No.~2025-YF09-00007-SN, and the Fundamental Research
Funds for the Central Universities under Grant Nos.~2682021ZTPY043,
2682025ZTPY001, 2682025ZTPY057, and 2682025ZTO002.

The author would like to thank Sung-Soo Kim, Kimyeong Lee, Futoshi Yagi,
Satoshi Nawata, and Rui-Dong Zhu for useful discussions and collaborations in
mathematical physics. The author is particularly grateful to Andrea Brini, Todar Milanov and Arpan Saha for
inspiring online seminars during the pandemic, and to Doan Nhat Minh and Van
Nguyen for enjoyable and fruitful collaborations. The author also thanks
Bohui Chen, An-Min Li, and Guosong Zhao for their constant supports, as well
as friends and colleagues for stimulating discussions at various conferences.
Finally, the author gratefully acknowledges the hospitality and support of the
Mainz Institute for Theoretical Physics (MITP) of the Cluster of Excellence
PRISMA+ (Project ID 390831469).

\end{document}